\newtheorem{theorem}{Theorem}[section]
\newtheorem{lemma}[theorem]{Lemma}
\newtheorem{proposition}[theorem]{Proposition}
\newtheorem{remark}[theorem]{Remark}
\newtheorem{definition}[theorem]{Definition}
\newcommand{\cqd}{\hspace{10pt}\fbox{}}
\begin{document}
\title{Resonant elliptic problems under Cerami condition}
\author{
 {Edcarlos D. da Silva\thanks{}}\\
{\small Universidade Federal de Goi\'as}\\
{\small IME-UFG, Caixa Postal 131, CEP:
74001-970 , Goi\^ania - GO, Brazil}}

\date{}
\maketitle
\begin{abstract}
We establish existence and multiplicity of solutions to resonant elliptic problems using appropriate variational methods. In order to prove the compactness required in our main theorems we apply the well known Cerami condition.
\end{abstract}

$\mathbf{keywords}$
Elliptic problems, Resonance, Strong Resonance, Multiplicity, Variational Methods.\\
MSC Primary 35J20, Secondary 35J65.

\section{Introduction}

In this paper we establish existence and multiplicity of solutions to elliptic problem
\begin{equation}\label{pi}
       \text{ }
       \left\{ \begin{array}[c]{cc}
           - \Delta u  = \lambda_{1}  u + f(x,u) \, \, \mbox{in} \, \, \Omega, & \\
           u = 0 \, \, \mbox{on}\, \, \partial \Omega, &\\
        \end{array}
      \right.
      \end{equation}
where $\Omega \subseteq \mathbb{R}^{N}, N \geq 3,$ is a bounded domain with regular boundary, $\lambda_{1}$ denotes the first positive eigenvalue on $(- \Delta, H^{1}_{0}(\Omega))$ and $f \in C(\overline{\Omega} \times \mathbb{R},\mathbb{R})$ is continuous function satisfying the following limit:
\begin{equation}\label{lim}
      \lim_{|t| \rightarrow \infty} \dfrac{f(x,t)}{t} = 0,
      \end{equation}
uniformly and for all $x \in \Omega$.

From a standard variational point of view, finding solutions of \eqref{pi} in $H^{1}_{0}(\Omega)$ is equivalent to find the critical points of the $C^{1}$
functional $J: H^{1}_{0}(\Omega) \rightarrow \mathbb{R}$ given by
               \begin{equation}\label{4}
               J(u) = \dfrac{1}{2}\int_{\Omega}|\nabla u|^{2}dx - \dfrac{\lambda_{1}}{2} \int_{\Omega} u^{2}dx - \int_{\Omega} F(x,u)dx
               \end{equation}
where
      \begin{equation*}
      F(x,t) = \int_{0}^{t}f(x,s)ds,  \ \, \ x \in \Omega,\ t \in \mathbb{R}.
      \end{equation*}

In that case the problem \eqref{pi} becomes resonant at infinity which have been studied by many authors in recent years, see \cite{LL}, \cite{BBF}, \cite{BC}, \cite{B}, \cite{ALP}, \cite{S}, \cite{costa}, \cite{costa2}, \cite{Liu} and references therein.

It is worthwhile to mention that the problem \eqref{pi} becomes strong resonant when the following conditions
\begin{equation}\label{lin}
      \lim_{|t| \rightarrow \infty} f(x,t) = 0, \,\, \mbox{and} \,\, |F(x, t)| \leq C, (x, t)  \in \Omega \times \mathbb{R}.
      \end{equation}
hold uniformly and for all $x \in \Omega$ for some $C > 0$. These problems have been considered in several works where was used, for example, the well known nonquadraticity condition which can be written as
\begin{equation*}
(NQ)^{+} \,\, \lim_{|t| \rightarrow \infty} 2 F(x,t) - f(x,t)t = \infty,
      \end{equation*}
or
\begin{equation*}
(NQ)^{-}  \,\,\lim_{|t| \rightarrow \infty} 2 F(x,t) - f(x,t)t = -\infty,
      \end{equation*}
uniformly and for any $x \in \Omega$. However, to the best our knowledge, there are few results around the problem \eqref{pi} without the conditions $(NQ)^{+}$ and $(NQ)^{-}$. We refer the reader to important works \cite{BBF}, \cite{S}, \cite{Li}, \cite{CM}, \cite{AC}.

Actually, the main purpose of this paper is to introduce a specific strong resonant elliptic condition given by
\begin{flushleft}\label{hsr}
$(HSR)$ There is a function $\widehat{F} \in C(\Omega,\mathbb{R})$ such that
\end{flushleft}
\begin{equation*}
\lim_{|t| \rightarrow \infty} t f(x,t) = 0, \,\, \mbox{and} \,\, |F(x, t)| \leq \widehat{F}(x), \,(x, t)  \in \Omega \times \mathbb{R}.
\end{equation*}
uniformly and for any $x \in \Omega$.

As the function $F$ is bounded by a continuous function and  $t f(x,t)$ is also bounded we see easily that $(NQ)^{+}$ and $(NQ)^{-}$ does not work under the condition $(HSR)$. In other words, we always have that
$$2 F(x,t) - f(x,t)t $$
is bounded from above and below under the hypothesis $(HSR)$.
So we have a natural ask: Is there solution for the problem \eqref{pi} under the condition $(HSR)$? Clearly, this ask has a partial answer when the functional $J$ satisfies some compactness propriety like the well known Cerami condition; see \cite{cerami}.

We point out that when either
\begin{flushleft}
$(F0)^{-}$ There is $a \in C(\Omega, \mathbb{R})$ such that
\end{flushleft}

$$ \limsup_{t \rightarrow \infty} t f(x, t) \leq a(x) \preceq 0$$
or
\begin{flushleft}
$(F0)^{+}$ There is $b \in C(\Omega, \mathbb{R})$ such that
\end{flushleft}

$$ \liminf_{t \rightarrow \infty} t f(x, t) \geq b(x) \succeq 0$$
holds we have that $J$ satisfies the Cerami condition at any levels of energy; see \cite{dasilva}. Here and throughout this paper $a(x)\preceq 0$ means that $a(x) \leq 0$ in $\Omega$ with strict inequality holding for some subset $\widehat{\Omega} \subset \Omega$ with positive Lebesgue measure.

In this way, our condition (HSR) complements the research for elliptic strong resonance problems using the fact that $(F0)^{+}$ and $(F0)^{-}$ does not work under our condition. For example, in this paper we will prove that the function $f(t) = t e^{-t^{2}}$ have its functional $J$ with the following property:
\begin{center}
    $J$ satisfies the Cerami condition at level $c \in \mathbb{R}$ if only if $c = 0$.
\end{center}

In that case we will classify the all levels of energy for the functional $J$ where the Cerami condition work or does not work. Similary results for Palais-Smale property have been considered in  \cite{costa}, \cite{AC}.

Recall also that Variational Methods have been studied in several works in order to ensure that elliptic problems under resonant conditions
admit existence and/or multiplicity of solutions. The main tool in that case is to find a sufficient condition for the compactness involved in
these methods. More specifically, it well known that Palais-Smale condition or Cerami condition are sufficient tools in order to prove the Deformation Lemma which is crucial in variational methods.

In order to describe our main results we will always consider the strong resonant situation given by $(HSR)$. In that case we shall define the following auxiliary continuous functions $F^{+}$ and $F^{-}$ given by
$$F^{+}(x) = \lim_{t \rightarrow + \infty} F(x,t), F^{-}(x) =  \lim_{t \rightarrow -\infty} F(x,t), x \in \Omega,$$
where the limits just above hold uniformly and for any $x \in \Omega$.

Now we shall assume the following hypothesis:
\begin{flushleft}
(F1) There is $t^{\star} \in \mathbb{R}$ such that
\end{flushleft}
$$\int_{\Omega} F(x, t^{\star} \phi_{1}) dx  > \min \left(\int_{\Omega} F^{+}(x) dx, \int_{\Omega} F^{-}(x) dx \right) \geq  0.$$

In what follows we assume the new hypotheses $(HSR)$ introduced in this work. Thus, using the Ekeland's Variational Principle, we shall prove the following result

\begin{theorem}\label{1}
Suppose (HSR) and $(F1)$. Then the problem \eqref{pi} admits at least one weak solution.
\end{theorem}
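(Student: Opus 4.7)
The plan is to apply Ekeland's variational principle at the global infimum of $J$ and then show that the Cerami sequence so obtained is relatively compact, so that its limit is the sought critical point.

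First, one checks that $J$ is bounded from below. The hypothesis (HSR) gives $|F(x,u)|\leq \widehat F(x)$, and the variational characterization of $\lambda_{1}$ gives $\|\nabla u\|_{2}^{2}\geq \lambda_{1}\|u\|_{2}^{2}$; combining these,
\begin{equation*}
J(u)=\frac{1}{2}\bigl(\|\nabla u\|_{2}^{2}-\lambda_{1}\|u\|_{2}^{2}\bigr)-\int_{\Omega}F(x,u)\,dx \ \geq\ -\int_{\Omega}\widehat F(x)\,dx,
\end{equation*}
so $c:=\inf_{H_{0}^{1}(\Omega)}J>-\infty$. Since $\|\nabla\phi_{1}\|_{2}^{2}=\lambda_{1}\|\phi_{1}\|_{2}^{2}$, one has $J(t^{\star}\phi_{1})=-\int_{\Omega}F(x,t^{\star}\phi_{1})\,dx$, and (F1) therefore gives
\begin{equation*}
c\ \leq\ J(t^{\star}\phi_{1})\ <\ -\min\!\left(\int_{\Omega}F^{+}(x)\,dx,\ \int_{\Omega}F^{-}(x)\,dx\right)\ \leq\ 0 .
\end{equation*}
Applying Ekeland's variational principle produces a Cerami sequence $(u_{n})\subset H_{0}^{1}(\Omega)$ with $J(u_{n})\to c$ and $(1+\|u_{n}\|)\,\|J'(u_{n})\|_{H^{-1}}\to 0$.

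The heart of the proof is the $H_{0}^{1}$-boundedness of $(u_{n})$. Decompose $u_{n}=t_{n}\phi_{1}+w_{n}$ with $w_{n}$ in the $H_{0}^{1}$-orthogonal complement of $\mathrm{span}\{\phi_{1}\}$; on this complement the form $Q(w):=\|\nabla w\|_{2}^{2}-\lambda_{1}\|w\|_{2}^{2}$ satisfies $Q(w)\geq(1-\lambda_{1}/\lambda_{2})\|\nabla w\|_{2}^{2}$. Since $|\int_{\Omega}F(x,u_{n})\,dx|\leq\int_{\Omega}\widehat F(x)\,dx$, the convergence $J(u_{n})\to c$ forces $(w_{n})$ to be bounded in $H_{0}^{1}(\Omega)$. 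Assume for contradiction that $|t_{n}|\to\infty$, with $t_{n}\to+\infty$ along a subsequence (the sign $t_{n}\to-\infty$ is analogous, yielding $F^{-}$ in place of $F^{+}$). Because $\phi_{1}>0$ in $\Omega$ and $(w_{n})$ has an a.e.\ convergent subsequence, $u_{n}(x)\to+\infty$ a.e. Testing the Cerami relation against $u_{n}$ gives
\begin{equation*}
\langle J'(u_{n}),\,u_{n}\rangle \ =\ Q(w_{n})-\int_{\Omega}f(x,u_{n})\,u_{n}\,dx \ \longrightarrow\ 0,
\end{equation*}
and the global bound $|t f(x,t)|\leq C$ from (HSR), paired with dominated convergence, yields $\int_{\Omega}f(x,u_{n})\,u_{n}\,dx\to 0$. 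Hence $Q(w_{n})\to 0$, so $w_{n}\to 0$ strongly, and a second application of dominated convergence to $\int_{\Omega}F(x,u_{n})\,dx$ identifies $c=-\int_{\Omega}F^{+}(x)\,dx$. Matching this value of $c$ against the strict upper bound in (F1) produces the contradiction that rules out $|t_{n}|\to\infty$.

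Once $(u_{n})$ is bounded, pass to a subsequence with $u_{n}\rightharpoonup u$ in $H_{0}^{1}(\Omega)$ and $u_{n}\to u$ in $L^{2}(\Omega)$ via the compact Sobolev embedding. Because $|f|$ is bounded and $u_{n}\to u$ a.e., dominated convergence gives $f(\cdot,u_{n})\to f(\cdot,u)$ in $L^{2}(\Omega)$, and the identity $\langle J'(u_{n}),u_{n}-u\rangle\to 0$ upgrades weak convergence to strong $H_{0}^{1}$-convergence. Passing to the limit in $J'(u_{n})\to 0$ yields $J(u)=c$ and $J'(u)=0$, so $u$ is a weak solution of \eqref{pi}. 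The principal obstacle is the $H_{0}^{1}$-boundedness of the Cerami sequence: the full strength of (F1) is concentrated there, ruling out escape of the minimizing sequence along $\mathrm{span}\{\phi_{1}\}$ which would otherwise pin $c$ to an asymptotic level $-\int_{\Omega}F^{\pm}(x)\,dx$.
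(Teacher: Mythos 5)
Your argument follows the same route as the paper's: establish that $J$ is bounded below via (HSR) (this is Proposition \ref{p1}, whose proof the paper omits), use (F1) to place $\inf J$ strictly below $-\min\left(\int_\Omega F^+ ,\int_\Omega F^-\right)$, produce a Cerami sequence at the level $c=\inf J$ by Ekeland's principle, and show that unboundedness of that sequence would force escape along $\pm\phi_1$ and hence pin $c$ to one of the asymptotic levels $-\int_\Omega F^{\pm}$. Where you genuinely differ is in how the compactness is obtained: the paper invokes Lemma \ref{cerami}, whose Step 1 normalizes $v_n=u_n/\|u_n\|$ and proves $v_n\to\pm\phi_1$ strongly, whereas you use the orthogonal splitting $u_n=t_n\phi_1+w_n$ together with the spectral-gap inequality $Q(w)\ge(1-\lambda_1/\lambda_2)\|\nabla w\|_2^2$ for the form $Q(w)=\|\nabla w\|_2^2-\lambda_1\|w\|_2^2$. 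This yields boundedness of $(w_n)$ from the energy level alone and isolates the single place where $\langle J'(u_n),u_n\rangle\to 0$ is needed (to force $Q(w_n)\to 0$); it is a self-contained and arguably cleaner derivation of the one special case of Lemma \ref{cerami} that the theorem actually uses.

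One caveat, which you inherit from the paper rather than introduce: in the endgame you derive $c=-\int_\Omega F^+$ (or $-\int_\Omega F^-$) and declare this incompatible with $c<-\min\left(\int_\Omega F^+,\int_\Omega F^-\right)$. Since $-\min\left(\int_\Omega F^+,\int_\Omega F^-\right)=\max\left(-\int_\Omega F^+,-\int_\Omega F^-\right)$, this only excludes $c$ from being the \emph{larger} of the two forbidden levels; if, say, $\int_\Omega F^+>\int_\Omega F^-$ and the sequence escapes along $+\phi_1$, then $c=-\int_\Omega F^+$ is perfectly consistent with $c<-\int_\Omega F^-$ and no contradiction results. Note moreover that $\inf J\le-\max\left(\int_\Omega F^+,\int_\Omega F^-\right)$ holds automatically (test $J$ on $t\phi_1$ and let $t\to\pm\infty$), so what is really required to keep $\inf J$ outside $\Gamma$ is a strict inequality against the \emph{max}; with (F1) as written the argument closes only when $\int_\Omega F^+=\int_\Omega F^-$. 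The paper's own three-line proof has exactly the same issue, so your proposal is faithful to it, but the final comparison should be made against $\max\left(\int_\Omega F^+,\int_\Omega F^-\right)$ rather than the minimum.
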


Next we will assume always that
$$f(x,0) \equiv 0, x \in \Omega$$
holds.
Then $u = 0$ is a trivial solution for the problem \eqref{pi}.
In this way, the main purpose now is to ensure the existence and multiplicity of nontrivial solutions. Thus we shall consider some additional hypotheses:

\begin{flushleft}
(F2) There are $t^{\pm} \in \mathbb{R}$ such that
\end{flushleft}
$$\int_{\Omega} F(x, t^{\pm} \phi_{1}) dx > 0.$$

\begin{flushleft}
(F3) There are $ \delta > 0$ and $\alpha \in (0, \lambda_{1})$ such that
\end{flushleft}
$$\lim_{t \rightarrow 0}  \frac{f(x,t)}{t} \leq \alpha - \lambda_{1}.$$

In this way, combining the Ekeland's Variational Principle and Mountain Pass Theorem, we can prove the following multiplicity result

\begin{theorem}\label{2}
Suppose $(HSR), (F1)$. Then the solution obtained from Theorem \ref{1} is nontrivial. In addition, assuming $(F2)$ the problem \eqref{pi} admits at least two nontrivial solutions. Furthermore, assuming also $(F3)$, problem \eqref{pi} possesses at least three nontrivial solutions.
\end{theorem}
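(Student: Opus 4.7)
My plan is to produce the three solutions by three separate variational constructions and to distinguish them via their critical values.

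\emph{Step 1 (Nontriviality of the Theorem~\ref{1} solution).} Because $f(x,0)\equiv 0$, we have $F(x,0)=0$ and $J(0)=0$. The quadratic part of $J$ vanishes on $\mathrm{span}\{\phi_{1}\}$ since $\int_{\Omega}|\nabla\phi_{1}|^{2}\,dx=\lambda_{1}\int_{\Omega}\phi_{1}^{2}\,dx$, so $J(t^{\star}\phi_{1})=-\int_{\Omega}F(x,t^{\star}\phi_{1})\,dx$; applying (F1) I get $J(t^{\star}\phi_{1})<-\min\bigl(\int_{\Omega}F^{+}\,dx,\int_{\Omega}F^{-}\,dx\bigr)\le 0$. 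The minimizer $u_{0}$ produced by Theorem~\ref{1} therefore satisfies $J(u_{0})\le J(t^{\star}\phi_{1})<0=J(0)$, hence $u_{0}\neq 0$.

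\emph{Step 2 (Second nontrivial solution under (F2)).} I would truncate: set $g^{+}(x,s)=f(x,s)$ for $s\ge 0$ and $g^{+}(x,s)=0$ for $s<0$ (and symmetrically $g^{-}$), let $G^{\pm}(x,t)=\int_{0}^{t}g^{\pm}(x,s)\,ds$, and consider
$$J^{\pm}(u)=\tfrac{1}{2}\int_{\Omega}|\nabla u|^{2}\,dx-\tfrac{\lambda_{1}}{2}\int_{\Omega}u^{2}\,dx-\int_{\Omega}G^{\pm}(x,u)\,dx.$$
The truncations $g^{\pm}$ still satisfy (HSR); since $t^{+}\phi_{1}\ge 0$ and $t^{-}\phi_{1}\le 0$, one has $J^{\pm}(t^{\pm}\phi_{1})=J(t^{\pm}\phi_{1})=-\int_{\Omega}F(x,t^{\pm}\phi_{1})\,dx<0$ by (F2), which is the analogue of (F1) for $J^{\pm}$; thus Theorem~\ref{1} applied to $J^{\pm}$ yields minimizers $v^{\pm}$ with $J^{\pm}(v^{\pm})<0$. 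A standard testing argument combined with the strong maximum principle shows $v^{+}>0>v^{-}$ in $\Omega$, and on $\{u\ge 0\}$ (resp.\ $\{u\le 0\}$) one has $J^{+}\equiv J$ (resp.\ $J^{-}\equiv J$), so $v^{+}$ and $v^{-}$ are two distinct nontrivial solutions of (\ref{pi}).

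\emph{Step 3 (Third nontrivial solution under (F3)).} From (F3) I obtain $F(x,t)\le\tfrac{\alpha-\lambda_{1}}{2}t^{2}+o(t^{2})$ as $t\to 0$, uniformly in $x$. Together with the Poincar\'e inequality $\lambda_{1}\|u\|_{2}^{2}\le\|u\|^{2}$ this gives, for small $\|u\|$,
$$J(u)\ge\tfrac{1}{2}\bigl(1-\alpha/\lambda_{1}\bigr)\|u\|^{2}+o(\|u\|^{2}),$$
so there exist $\rho,\beta>0$ with $J\ge\beta$ on $\{\|u\|=\rho\}$. Since $J(v^{\pm})<0<\beta$, necessarily $\|v^{\pm}\|>\rho$, and every continuous path from $v^{+}$ to $v^{-}$ crosses this sphere; the mountain-pass geometry is in force. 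Under (HSR) the only possible failure levels of the Cerami condition for $J$ are $-\int_{\Omega}F^{\pm}\,dx\le 0$, so $J$ satisfies $(C)_{c}$ at every level $c\ge\beta>0$; the Mountain Pass Theorem then provides a critical point $u_{3}$ with $J(u_{3})\ge\beta>0$, necessarily distinct from $v^{\pm}$ (where $J<0$) and from $0$.

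\emph{Main obstacle.} The delicate step is Step~2: I must re-run the compactness analysis of \cite{dasilva} for the truncated functionals $J^{\pm}$, which are no longer controlled by (HSR) on both sides. For instance the half-ray $\{-s\phi_{1}:s\ge 0\}$ is "flat" for $J^{+}$ (there $J^{+}\equiv 0$), so Cerami sequences might try to escape to infinity along it; the strict negativity of $\inf J^{+}$ imposed by (F2) is precisely what rules this out. Verifying this carefully, together with the testing argument that yields $v^{+}\ge 0$, is the main technical work.
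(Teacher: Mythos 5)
Your Step 1 is exactly the paper's argument, and your Step 3 is essentially Proposition~\ref{p2} plus the Mountain Pass Theorem: the minimax level is $\ge\beta>0$ while $\Gamma\subset(-\infty,0]$ under (F1), so the Cerami condition is automatic there. The genuine gap is in Step 2. You produce $v^{\pm}$ by \emph{minimizing} the truncated functionals $J^{\pm}$ via Ekeland, but for the infimum to be attained you need the Cerami condition at the level $\inf J^{\pm}$. Re-running Lemma~\ref{cerami} for $J^{+}$, an unbounded Cerami sequence normalizes to $+\phi_{1}$ or to $-\phi_{1}$, so the failure set is $\Gamma^{+}=\bigl\{-\int_{\Omega}F^{+}\,dx,\;0\bigr\}$ (the second value because $G^{+}\equiv 0$ on $t\le 0$). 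Your ``main obstacle'' paragraph only addresses the escape along $-s\phi_{1}$ at level $0$, which $\inf J^{+}<0$ does rule out. It does not address the escape along $+s\phi_{1}$: since $J^{+}(n\phi_{1})\to-\int_{\Omega}F^{+}\,dx$, one always has $\inf J^{+}\le-\int_{\Omega}F^{+}\,dx$, and (F2) gives no comparison between $\int_{\Omega}F(x,t^{+}\phi_{1})\,dx$ and $\int_{\Omega}F^{+}\,dx$. If, say, $F(x,\cdot)$ increases to $F^{+}(x)$ on $[0,\infty)$, then $\inf J^{+}=-\int_{\Omega}F^{+}\,dx$ is exactly a bad level and is typically not attained. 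To close this you would need a one-sided analogue of (F1), namely $\int_{\Omega}F(x,t^{+}\phi_{1})\,dx>\int_{\Omega}F^{+}\,dx$ (and symmetrically for $J^{-}$); (F2) alone does not supply it.

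The paper takes a different route for the sign-definite pair precisely to avoid this: it extracts $u_{1},u_{2}$ from $J^{\pm}$ by the \emph{Mountain Pass Theorem} (Remark~\ref{re1}), so the critical values are positive and lie outside $\Gamma^{\pm}\subset(-\infty,0]$, where Cerami holds for free; (F2) supplies the points $t^{\pm}\phi_{1}$ with $J^{\pm}(t^{\pm}\phi_{1})<0$ and (F3) supplies the ring $J^{\pm}\ge\beta$ on a small sphere. The trade-off is that the paper's sign-definite solutions then genuinely use (F3) as well, whereas your minimization would, if repaired, give the two-solution clause from (F1)--(F2) alone. Your distinctness bookkeeping (signs separate $v^{+}$ from $v^{-}$; the sign of the energy separates the mountain-pass point from the minimizers) is fine; the one step that does not go through as written is the attainment of $\inf J^{\pm}$.
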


Next we will consider the following hypotheses:
\begin{flushleft}
(F4) There are $ \epsilon > 0, \delta > 0$ such that
$$\dfrac{1}{2}(\lambda_{k} - \lambda_{1})t^{2} \leq F(x,t) \leq \dfrac{1}{2}(\lambda_{k + 1} - \lambda_{1} - \epsilon)t^{2}, x \in \Omega, |t| \leq \delta.$$
\end{flushleft}

\begin{flushleft}
(F5) There are continuous functions $a, b \in C(\Omega, \mathbb{R})$ satisfying either
$$f(x, \pm \phi_{1}) \geq a(x) \succ 0$$
or
$$f(x, \pm \phi_{1}) \leq b(x) \prec 0.$$
\end{flushleft}

Now we will prove a multiplicity result for our problem using the well known Liking Theorem provided by Brezis-Nirenberg, see \cite{BN}.
\begin{theorem}\label{3}
Suppose $(HSR), (F1), (F4), (F5)$. Then the problem \eqref{pi} possesses at least two nontrivial solutions.
\end{theorem}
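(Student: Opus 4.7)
The strategy is to combine a minimization (yielding a nontrivial critical point at strictly negative energy) with a Brezis-Nirenberg linking argument (yielding a nontrivial critical point at strictly positive energy); the two will then be distinct by energy. Decompose $H_0^1(\Omega) = V_k \oplus W$ where $V_k = \mathrm{span}\{\phi_1,\ldots,\phi_k\}$ and $W = V_k^\perp$. Hypothesis (F4) will supply a local linking at the origin, (F1) a test function with $J<0$, (F5) the ingredient separating the infimum from the two Cerami-breakdown levels $-\int_\Omega F^\pm\,dx$, and (HSR) both boundedness below and conditional Cerami compactness.

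\textbf{Preparations.} I first check the local linking at $0$. On $V_k$, finite-dimensionality gives $\|u\|_\infty\le C\|u\|$, so for $\|u\|\le\rho$ small (F4)'s pointwise lower bound applies and, expanding $u=\sum c_i\phi_i$, one gets $J(u)\le\tfrac{1}{2}\sum_{i=1}^k(\lambda_i-\lambda_k)c_i^2\le 0$. On $W$, I split $\int F(x,u)\,dx = \int_{|u|\le\delta}+\int_{|u|>\delta}$: the first term is dominated by (F4)'s upper estimate, and the second, using $|F|\le\widehat F$ from (HSR) together with Chebyshev's inequality and Sobolev $H_0^1\hookrightarrow L^{2^*}$, contributes $O(\|u\|^{2^*})$. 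Since $\|\nabla u\|_2^2\ge\lambda_{k+1}\|u\|_2^2$ on $W$, this yields $J(u)\ge c_1\|u\|^2 - c_2\|u\|^{2^*}$, strictly positive for small nonzero $\|u\|$. Also $Q(u)\ge 0$ combined with $|F|\le\widehat F$ gives $J(u)\ge -\int_\Omega\widehat F\,dx$, so $J$ is bounded below; and by the Cerami analysis recalled in the introduction, $J$ satisfies the Cerami condition at every level $c\notin\{-\int F^+\,dx,\,-\int F^-\,dx\}$.

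\textbf{The two critical points.} For the negative-energy solution, set $c_{-} = \inf_{H_0^1(\Omega)} J$. By (F1), $c_{-}\le J(t^{\star}\phi_1) = -\int F(x,t^{\star}\phi_1)\,dx < -\min(\int F^+,\int F^-)\le 0$. Ekeland's variational principle produces a Cerami sequence at level $c_{-}$. The pivotal point is $c_{-}\notin\{-\int F^+,-\int F^-\}$, which is where (F5) enters: its strict sign condition on $f(x,\pm\phi_1)$ gives $\frac{d}{ds}\int F(x,s\phi_1)\,dx|_{s=\pm 1} = \int f(x,\pm\phi_1)\phi_1\,dx\ne 0$ with definite sign, and from this one constructs a perturbation of $\pm\phi_1$ along the $\phi_1$-direction on which $J$ is strictly smaller than both $-\int F^\pm$. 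Hence $c_{-}$ lies strictly below both failure levels; Cerami applies and delivers $u_1$ with $J(u_1)=c_{-}<0$, so $u_1\ne 0$. For the positive-energy solution, apply the Brezis-Nirenberg linking theorem \cite{BN} with the local linking data above: the resulting minimax value $c_{+}\ge\eta>0$ automatically satisfies $c_{+}>0\ge -\int F^\pm$, hence Cerami holds at $c_{+}$ and yields a critical point $u_2$ with $J(u_2)>0$. Then $u_2\ne 0$, and $u_1\ne u_2$ by opposite signs of the energy.

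\textbf{Main obstacle.} The delicate step is verifying $c_{-}\notin\{-\int F^+,-\int F^-\}$: a priori the infimum could land exactly on one of the two levels where compactness breaks down under (HSR), and then the Cerami sequence produced by Ekeland might diverge along the $\phi_1$-direction without producing a genuine critical point. The sign hypothesis (F5) on $f(x,\pm\phi_1)$ is precisely what rescues the argument, by producing an explicit test function beating both thresholds. The linking step is by comparison benign, because every strictly positive critical level automatically sits above $\max(-\int F^+,-\int F^-)\le 0$.
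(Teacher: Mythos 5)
Your skeleton (a global minimizer at strictly negative energy plus a Brezis--Nirenberg linking point at strictly positive energy, distinguished by the sign of the energy) is the same as the paper's, and your verification of the local linking from $(F4)$ and of boundedness from below via $(HSR)$ is sound. However, there are two genuine gaps, both concerning where the failure of the Cerami condition on $\Gamma=\left\{-\int_\Omega F^+\,dx,\,-\int_\Omega F^-\,dx\right\}$ actually bites. First, your use of $(F5)$ to show $c_-\notin\Gamma$ is a non sequitur: the sign of $\frac{d}{ds}\int_\Omega F(x,s\phi_1)\,dx$ at $s=\pm1$ is purely local information at $s=\pm1$ and gives no control on $\sup_{s}\int_\Omega F(x,s\phi_1)\,dx$ relative to the asymptotic values $\int_\Omega F^{\pm}\,dx=\lim_{s\to\pm\infty}\int_\Omega F(x,s\phi_1)\,dx$; it therefore cannot produce a test function on which $J$ lies strictly below \emph{both} elements of $\Gamma$. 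Note that $(F1)$ only yields $J(t^\star\phi_1)<-\min\left(\int_\Omega F^+\,dx,\int_\Omega F^-\,dx\right)=\max\Gamma$, i.e.\ below the larger element of $\Gamma$, so the possibility $\inf J=\min\Gamma$ is not excluded by your argument.

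Second, and more seriously, the claim that "the linking step is benign" because $c^{\star}>0\notin\Gamma$ misreads how the Brezis--Nirenberg theorem works. The minimax level itself is harmless, but the construction of the boundary map $p^{\star}$ on $\partial E$ requires pushing $V\cap\partial B$ by a (normalized) negative pseudo-gradient flow down through all levels in $(\inf J,0]$ --- exactly the interval containing $\Gamma$. For this one needs a uniform bound $(1+\|u\|)\|J'(u)\|\ge\zeta$ on $J^{c}\setminus(U\cup B_\delta)$, and the failure of Cerami on $\Gamma$ permits precisely the bad sequences that would destroy this bound: unbounded $(u_n)$ with $J(u_n)\to r\in\Gamma$ and $(1+\|u_n\|)\|J'(u_n)\|\to0$. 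This is where the paper actually uses $(F5)$ (Proposition \ref{pp}, Case 2): such a sequence forces $u_n/\|u_n\|\to\pm\phi_1$, and the strict sign condition on $f(x,\pm\phi_1)$ contradicts $\int_\Omega f(x,\pm\phi_1)\phi_1\,dx=0$. Your proposal omits this step entirely, so the existence of the second (positive-energy) critical point is not justified as written; $(F5)$ belongs in the deformation/flow construction, not (only) in the minimization.
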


Throughout this paper we use the following notations:
\begin{itemize}
  \item $\|\|$ denotes the norm in $H^{1}_{0}(\Omega),$
  \item $\|\|_{p}$ is the norm in $L^{p}(\Omega), 1 \leq p \leq 2^{\star},$
  \item $\phi_{1}$ denotes the first eigenfunction for $(- \Delta, H^{1}_{0}(\Omega))$,
  \item $C, C_{1}, \ldots$ represent positive real numbers.
\end{itemize}

This paper is organized as follows: In section 2 we give some preliminares results around our problem. In section 3 we prove our main theorems. Section 4 is devoted to simple examples using our condition $(HSR)$.
\section{Preliminares}

In this section we will prove the Cerami condition for specials levels of energy. After that, we shall considered some useful results in order to use min-max theorems.

Let $H$ be a Hilbert space. We recall that a functional $J : H \rightarrow \mathbb{R},$ of class $C^{1}$, satisfies the Cerami condtion at level $c
\in
\mathbb{R}$, in short $(Ce)_{c}$,  if for any sequences $(u_{n})_{n \in \mathbb{N}} \in H$ such that
$$J(u_{n}) \rightarrow c, \|J^{\prime}(u_{n})\|(1 + \|u_{n}\|) \rightarrow 0, \,\,\mbox{as} \,\, n \rightarrow \infty$$
admits a convergent subsequence. When $J$ satisfies the $(Ce)_{c}$ property for any $c \in \mathbb{R}$ we say purely that
$J$ satisfies the $(Ce)$ property.

Now we will consider a Hardy-Sobolev-Polya inequality provided in \cite{polya}. This inequality is a powerful tool in order to prove the Cerami condition for our problem.
\begin{proposition}\label{hardy}
Let $\phi \in H^{1}_{0}(\Omega)$ be a function. Then we obtain the following assertions:
\begin{description}
  \item[i)] $\dfrac{\phi}{\phi_{1}^{\tau}} \in L^{p}(\Omega)$ where $\tau \in [0,1]$ and
  $$\dfrac{1}{p} = \dfrac{1}{2} - (1 - \tau)\dfrac{1}{N}.$$
  \item[ii)] There is a constant $C > 0$ such that $$\int_{\Omega} \left|\dfrac{\phi}{\phi_{1}^{\tau}} \right|^{p}dx \leq C \|\phi\|^{p}, \forall \,\,\phi \in H^{1}_{0}(\Omega).$$
\end{description}
\end{proposition}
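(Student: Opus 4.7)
The plan is to prove this Hardy--Sobolev--Polya inequality by establishing the two endpoint cases $\tau=0$ and $\tau=1$ and then interpolating between them via H\"older's inequality. Observe that at $\tau = 0$ the exponent becomes $p = 2^{\star} = 2N/(N-2)$, so the claim reduces to the standard Sobolev embedding $H^{1}_{0}(\Omega) \hookrightarrow L^{2^{\star}}(\Omega)$. At $\tau = 1$ one has $p = 2$, and the inequality becomes the weighted Hardy-type bound
$$\int_{\Omega} \left(\frac{\phi}{\phi_{1}}\right)^{2} dx \leq C \|\phi\|^{2}.$$

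For the $\tau = 1$ endpoint I would exploit the regularity of $\phi_{1}$: since $\phi_{1} \in C^{1}(\overline{\Omega})$ with $\phi_{1} > 0$ in $\Omega$ and $\Omega$ has smooth boundary, Hopf's boundary point lemma gives $\phi_{1}(x) \geq c_{0}\, d(x,\partial \Omega)$ for some $c_{0} > 0$, where $d(\cdot,\partial \Omega)$ denotes the distance to the boundary. Combined with the classical Hardy inequality for $H^{1}_{0}$ functions on smooth bounded domains, $\int_{\Omega} \phi^{2}/d(x,\partial \Omega)^{2}\, dx \leq C \|\phi\|^{2}$, this yields the desired weighted $L^{2}$-estimate.

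For intermediate $\tau \in (0,1)$ the strategy is the factorization
$$\left|\frac{\phi}{\phi_{1}^{\tau}}\right|^{p} = \left|\frac{\phi}{\phi_{1}}\right|^{p\tau} |\phi|^{p(1-\tau)},$$
to which I would apply H\"older's inequality with conjugate exponents $q_{1} = 2/(p\tau)$ and $q_{2} = 2^{\star}/(p(1-\tau))$, chosen so that the first factor ends up raised to exponent $2$ (allowing the Hardy endpoint) and the second to exponent $2^{\star}$ (allowing Sobolev). A short computation shows that the condition $1/q_{1} + 1/q_{2} = 1$ reduces exactly to $1/p = \tau/2 + (1-\tau)/2^{\star} = 1/2 - (1-\tau)/N$, which is precisely the hypothesis on $p$ in the statement; thus the interpolation closes up and one obtains (ii) with some constant $C$. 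Part (i) is then immediate from (ii) since the right-hand side is finite for every $\phi \in H^{1}_{0}(\Omega)$.

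The main obstacle is the Hardy-type endpoint at $\tau = 1$: it requires both enough regularity of $\partial \Omega$ to invoke the distance-squared Hardy inequality and the Hopf-type lower bound on $\phi_{1}$; conceptually it is also the only ingredient that is not standard Sobolev theory. Once this endpoint is in place, everything else is a direct H\"older interpolation.
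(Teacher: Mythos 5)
Your argument is correct, but it cannot be compared line-by-line with the paper's, because the paper offers no proof at all: Proposition \ref{hardy} is simply quoted as a known Hardy--Sobolev--P\'olya inequality with a pointer to \cite{polya}. Your interpolation scheme is a legitimate self-contained derivation: the $\tau=0$ endpoint is exactly the Sobolev embedding $H^{1}_{0}(\Omega)\hookrightarrow L^{2^{\star}}(\Omega)$; the $\tau=1$ endpoint follows from the classical Hardy inequality $\int_{\Omega}\phi^{2}d(x,\partial\Omega)^{-2}dx\leq C\|\phi\|^{2}$ together with the Hopf lower bound $\phi_{1}(x)\geq c_{0}\,d(x,\partial\Omega)$; and the H\"older step with exponents $q_{1}=2/(p\tau)$, $q_{2}=2^{\star}/(p(1-\tau))$ checks out, since $1/q_{1}+1/q_{2}=1$ is equivalent to $1/p=\tau/2+(1-\tau)/2^{\star}=1/2-(1-\tau)/N$ and both exponents are $\geq 1$ precisely because $N\geq 3$ forces $p\tau\leq 2$ and $p(1-\tau)\leq 2^{\star}$. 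Two small points you should make explicit if you write this up: the Hopf boundary point lemma and the distance-function Hardy inequality both require quantitative boundary regularity (an interior ball condition, or $C^{1,1}$ boundary, suffices for both), which is consistent with but stronger than the paper's vague ``regular boundary''; and the upper bound $\phi_{1}(x)\leq C_{1}d(x,\partial\Omega)$ is not needed for this direction of the inequality, so only the lower bound from Hopf enters. What your approach buys over the paper's bare citation is a transparent proof whose only nonstandard ingredient is the weighted $L^{2}$ endpoint, which is exactly the case ($\tau=1$, $p=2$) actually invoked later in the proof of Lemma \ref{cerami}.
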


Next we will prove the $(Ce)_{c}$ property for some leves of energy $c \in \mathbb{R}$ in order to ensure the compactness required in the proof to our main theorems. More precisely, we can prove the following result:

\begin{lemma}\label{cerami}
Suppose $(HSR)$. Then the functional $J$ satisfies the $(Ce)_{c}$ condition if only if $c \in \mathbb{R}\backslash \Gamma$ where we define
$$\Gamma = \left\{- \int_{\Omega} F^{+}(x)dx, - \int_{\Omega} F^{-}(x)dx \right\}.$$
\end{lemma}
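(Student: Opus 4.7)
The plan is to prove the two directions of the equivalence separately. I first observe that $(HSR)$ makes $f$ uniformly bounded on $\overline{\Omega}\times\mathbb{R}$: for $|t|\geq 1$, $|f(x,t)|\leq |tf(x,t)|$, which is bounded by $(HSR)$, while for $|t|\leq 1$, continuity of $f$ on a compact set suffices. This uniform boundedness of $f$ is used repeatedly below.

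For the necessity direction, suppose $c\in\Gamma$; without loss of generality take $c = -\int_\Omega F^+(x)\,dx$. Set $u_n = t_n\phi_1$ with $t_n \to +\infty$. Since $-\Delta\phi_1 = \lambda_1\phi_1$, one computes $J(u_n) = -\int_\Omega F(x,t_n\phi_1)\,dx$ and $J'(u_n) = -f(\cdot,t_n\phi_1)$ in $H^{-1}(\Omega)$; using $|F|\leq\widehat{F}$ and dominated convergence, $J(u_n)\to c$. For $w\in H^1_0(\Omega)$ with $\|w\|=1$, Cauchy--Schwarz and Proposition \ref{hardy} with $\tau = 1$ give
\[
|t_n\langle J'(u_n),w\rangle| = \Bigl|\int_\Omega t_n\phi_1 f(x,t_n\phi_1)\,\tfrac{w}{\phi_1}\,dx\Bigr| \leq \|t_n\phi_1 f(\cdot,t_n\phi_1)\|_2\,\|w/\phi_1\|_2 \leq C\,\|t_n\phi_1 f(\cdot,t_n\phi_1)\|_2,
\]
and the right-hand side vanishes because the integrand tends to zero a.e. and is uniformly bounded by $(HSR)$. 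Hence $(1+\|u_n\|)\|J'(u_n)\|\to 0$, while $\|u_n\|\to\infty$ rules out any convergent subsequence; the case $c = -\int_\Omega F^-(x)\,dx$ is symmetric via $t_n\to -\infty$.

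For the sufficiency direction, assume $c\notin\Gamma$ and let $(u_n)$ be a Cerami sequence at level $c$. Decompose $u_n = s_n\phi_1 + v_n$ with $v_n$ orthogonal to $\phi_1$. Testing $J'(u_n)$ against $v_n$ and using the spectral gap $\|v_n\|^2 - \lambda_1\|v_n\|_2^2 \geq (1-\lambda_1/\lambda_2)\|v_n\|^2$ together with the uniform bound on $f$ yields $\|v_n\|\leq C$. Suppose toward contradiction that $\|u_n\|\to\infty$; then $|s_n|\to\infty$, say $s_n\to+\infty$, and extracting an a.e. convergent subsequence of $v_n$ gives $u_n\to +\infty$ a.e. in $\Omega$. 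Testing $J'(u_n)$ against $u_n$ and invoking that $u_n f(\cdot,u_n)\to 0$ a.e. and is uniformly bounded by $(HSR)$, dominated convergence yields $\|v_n\|^2 - \lambda_1\|v_n\|_2^2\to 0$, hence $v_n\to 0$ in $H^1_0$; a second application of dominated convergence to $F(\cdot,u_n)\to F^+$ then forces $J(u_n)\to -\int_\Omega F^+(x)\,dx\in\Gamma$, contradicting $c\notin\Gamma$. Once $(u_n)$ is bounded, the standard weak-to-strong argument (extract $u_n\rightharpoonup u_0$, deduce $J'(u_0)=0$ via $L^2$-compactness of the Nemitskii map $v\mapsto f(\cdot,v)$, then test $J'(u_n)-J'(u_0)$ against $v_n - v_0$ and exploit the same spectral gap) gives strong convergence. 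The main obstacle of the whole argument is the $H^{-1}$-gradient estimate in the necessity direction, where Proposition \ref{hardy} is indispensable to convert the scalar decay $tf(x,t)\to 0$ into dual-norm decay of $f(\cdot,t_n\phi_1)$ after multiplication by $t_n$, absorbing the boundary degeneracy of $\phi_1$.
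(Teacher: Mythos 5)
Your proof is correct and shares the overall architecture of the paper's argument --- the sequence $t_n\phi_1$ witnesses the failure of $(Ce)_c$ at the two levels in $\Gamma$, and a blow-up analysis shows that any unbounded Cerami sequence aligns with $\pm\phi_1$ and forces $c\in\Gamma$ --- but both halves are implemented differently. For necessity, the paper splits $\Omega$ into a compactly contained piece $\Omega_n$ on which $|f(x,u_n)u_n|\le\epsilon$ and a complement of measure at most $(\epsilon/\|u_n\|)^{2}$, estimating the two contributions separately (the Hardy inequality with $\tau=1$ on the first, H\"older plus boundedness of $f$ on the second); you obtain the dual-norm decay in one stroke from Cauchy--Schwarz, the same Hardy inequality, and dominated convergence applied to $t_n\phi_1 f(\cdot,t_n\phi_1)$ in $L^{2}$, which is cleaner and avoids the delicate choice of $\Omega_n$. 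For sufficiency, the paper normalizes $v_n=u_n/\|u_n\|$, identifies the weak limit as a normalized first eigenfunction and upgrades to strong convergence; you instead decompose $u_n=s_n\phi_1+v_n$ orthogonally and use the spectral gap $\|v\|^{2}\ge\lambda_2\|v\|_2^{2}$ on the complement of $\phi_1$ to bound $\|v_n\|$ a priori and then to force $v_n\to 0$. The two mechanisms are equivalent in strength here, though yours has the small advantage of explicitly treating the bounded-sequence case (weak limit is a critical point, then strong convergence via compactness of the Nemitskii map), which the paper's Step 1 leaves implicit. Both arguments tacitly assume the integrability of the dominating function $\widehat{F}$, which $(HSR)$ only declares continuous on the open set $\Omega$; this is a defect of the hypothesis rather than of either proof.
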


\begin{proof}
We will divide the proof into two steps.

$\mathbf{Step 1}$
First of all, we shall prove that $J$ satisfies the $(Ce)_{c}$ condition for any $c \in \mathbb{R}\backslash \Gamma$. Assume, by contradiction, that there exist a sequence $(u_{n})_{n \in \mathbb{N}}$ satisfying the following conditions:
\begin{itemize}
    \item $J(u_{n}) \rightarrow c,$
    \item $\|J^{\prime}(u_{n})\|(1 + \|u_{n}\|) \rightarrow 0,$
    \item $\|u_{n}\| \rightarrow \infty$ as $ n \rightarrow \infty.$
\end{itemize}
Define $v_{n} = \dfrac{u_{n}}{\|u_{n}\|}$. It follows that $v_{n}$ is bounded in $H^{1}_{0}(\Omega)$ and there exist $v \in H^{1}_{0}(\Omega)$
satisfying
\begin{itemize}
    \item $v_{n} \rightharpoonup v$  in $H^{1}_{0}(\Omega), v_{n} \rightarrow v$ in
    $L^{q}(\Omega), q \in [1, 2^{\star}),$
    \item $v_{n}(x) \rightarrow v(x)$ a.e. in
    $\Omega,$
    \item $ |v_{n}(x)| \leq h(x),$ for some $h \in L^{q}(\Omega)$.
\end{itemize}

On the other hand, we recall that
\begin{eqnarray}\label{13}
  \dfrac{J^{'}(u_{n})\phi}{\|u_{n}\|}  &=& \int_{\Omega} \nabla v_{n} \nabla \phi dx - \lambda_{1} \int_{\Omega}
 v_{n} \phi dx \nonumber - \int_{\Omega} \dfrac{f(x,u_{n})}{\|u_{n}\|} \phi dx \rightarrow 0 \nonumber \\
\end{eqnarray}
as $n \rightarrow \infty$ for any $\phi \in H^{1}_{0}(\Omega)$.

In that case, using the last equation, we conclude that
\begin{equation}
\int_{\Omega} \nabla v \nabla \phi dx = \lambda_{1} \int_{\Omega} v \phi dx, \phi \in H^{1}_{0}(\Omega).
\end{equation}
These facts show that $\|v\| \leq 1$. Besides that, using $v_{n}$ as test function in \eqref{13}, we see easily see that
\begin{equation*}
  \|v\|^{2} = \int_{\Omega} |\nabla v|^{2}dx \geq \lambda_{1} \int_{\Omega} v^{2} = \lim_{n\rightarrow \infty}\lambda_{1} \int_{\Omega} v_{n}^{2} = \lim_{n\rightarrow \infty} \int_{\Omega} |\nabla v_{n}|^{2} = 1
\end{equation*}
In other words, $v$ is an eigenfunction associated to $\lambda_{1}$ such that $\|v\| = 1$. In particular, it follows that $v = \pm \phi_{1}$.
As a consequence $v_{n} \rightarrow v$ in $H^{1}_{0}(\Omega)$. Therefore
$$u_{n}(x) \rightarrow + \infty, n \rightarrow \infty, x \in \Omega$$
or
$$u_{n}(x) \rightarrow - \infty, n \rightarrow \infty, x \in \Omega$$
Note that $(HSR)$ implies that $f(x,t)t$ is a bounded function in $\Omega \times \mathbb{R}$. Then
\begin{equation*}
\int_{\Omega} f(x, u_{n})u_{n} dx \rightarrow 0 , n \rightarrow \infty,
\end{equation*}
by a straightforward application of Lebesgue Convergence Theorem.

On the other hand, choosing $u_{n}$ as test function, we obtain
\begin{equation*}
\langle J^{'}(u_{n}), u_{n}\rangle= \int_{\Omega} |\nabla u_{n}|^{2}dx - \lambda_{1} \int_{\Omega} u_{n}^{2}dx - \int_{\Omega} f(x,u_{n})u_{n}dx.
\end{equation*}
Under these assumptions follows that
\begin{equation}
\lim_{n \rightarrow \infty}\int_{\Omega} |\nabla u_{n}|^{2}dx - \lambda_{1} \int_{\Omega} u_{n}^{2}dx = 0
\end{equation}

However, using the functional $J$, we have
\begin{equation*}
c = \lim_{n \rightarrow \infty} J(u_{n}) = - \lim_{n \rightarrow \infty} \int_{\Omega} F(x,u_{n})dx.
\end{equation*}
Using one more time Lesbesgue Convergence Theorem follows that
$$c = - \int_{\Omega} F^{+}(x)dx \,\,\mbox{or} \,\,c  = - \int_{\Omega}F^{-}(x)dx.$$
This is a contradiction! So we finish the proof for the first step.

$\mathbf{Step 2}$ In this step we shall prove that $J$ does not satisfy the $(Ce)_{c}$ condition for any number in $\Gamma$. Define $u_{n} = n \phi_{1}$ which is an unbounded sequence in $H^{1}_{0}(\Omega)$. We easily see that

\begin{equation*}
J(u_{n}) \rightarrow c
\end{equation*}
for some $c \in \Gamma$.
In addition we may prove that
\begin{equation}\label{8}
\|J^{'}(u_{n})\| \left( 1 + \|u_{n}\| \right) \rightarrow 0, n \rightarrow \infty.
\end{equation}
In order to do that we choose $\Omega_{n} \subset \subset \Omega$ satisfying
\begin{equation}\label{7}
|\Omega \backslash \Omega_{n}|^{\frac{1}{2}}  \leq \dfrac{\epsilon}{\|u_{n}\|}
\end{equation}
where $\epsilon > 0$.
So we take $n_{0} \in \mathbb{N}$ such that
\begin{equation*}
n \geq n_{0} \,\,\mbox{implies that} \,\, |f(x, u_{n})u_{n}| \leq \epsilon, x \in \Omega_{n}.
\end{equation*}

In this way, taking $\phi \in H^{1}_{0}(\Omega), v_{n} = \dfrac{u_{n}}{\|u_{n}\|}$, follows the following estimates
\begin{eqnarray}
\langle J^{'}(u_{n}), \phi \rangle \|u_{n}\|& \leq & \left|  \int_{\Omega} f(x, u_{n}) u_{n} v_{n}^{-1}\phi dx \right|  \leq \int_{\Omega} |f(x, u_{n}) u_{n}| |v_{n}|^{-1}  |\phi| dx  \nonumber \\
   &=& \int_{\Omega_{n}} |f(x, u_{n}) u_{n}| |v_{n}|^{-1} |\phi| dx +  \|u_{n}\| \int_{\Omega\backslash \Omega_{n}} |f(x, u_{n})| |\phi| dx \nonumber \\
\end{eqnarray}
Next we will analyze the terms on the right hand described just above.
We easily see that
\begin{equation}\label{6}
\int_{\Omega_{n}} |f(x, u_{n}) u_{n}| |v_{n}|^{-1} |\phi| dx \leq \epsilon \|\phi_{1}\| \int_{\Omega} \left|\dfrac{\phi}{\phi_{1}} \right| dx \leq \epsilon C \|\phi\| \|\phi_{1}\|
\end{equation}
where was used the Hardy-Littlewood-Polya inequality with $\tau = 1$; see  Proposition \ref{hardy}.

We also have that
\begin{eqnarray}\label{5}
\|u_{n}\| \int_{\Omega\backslash \Omega_{n}} |f(x, u_{n})| |\phi| dx &\leq& C \|u_{n}\| \int_{\Omega\backslash \Omega_{n}} |\phi| dx  \leq C \|u_{n}\| \left(\int_{\Omega\backslash \Omega_{n}} |\phi|^{2} dx \right)^{\frac{1}{2}} |\Omega\backslash \Omega_{n}|^{\frac{1}{2}} \nonumber \\
   &\leq& C \|u_{n}\| |\Omega \backslash \Omega_{n}|^{\frac{1}{2}}\|\phi\| \leq \epsilon C \|\phi\| \nonumber \\
\end{eqnarray}
where was used the fact that $f$ is bounded function, Holder's inequality, Sobolev Embedding and \eqref{7}. As a consequence \eqref{6} and \eqref{5} imply that
\begin{equation*}
\|J^{'}(u_{n})\| \|u_{n}\| \leq \epsilon C  + \epsilon C  \|\phi_{1}\|.
\end{equation*}
These inequalities imply \eqref{8} and the proof of this lemma is now complete. \cqd
\end{proof}

Next we consider a result which ensures that $J$ is bounded from below. This permit us to aplly the well known Ekeland's Variational Principle for our problem.
\begin{proposition}\label{p1}
Suppose $(HSR)$ and $(F1)$. Then $J$ satisfies
$$\inf\left\{J(u), u \in H^{1}_{0}(\Omega) \right\} > - C |\Omega|,$$
for some $C > 0$.
In particular, $J$ is bounded from below.
\end{proposition}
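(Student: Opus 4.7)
\textit{Proof proposal.} The plan is to estimate the two parts of $J$ separately: the quadratic part will be nonnegative by the variational characterization of $\lambda_{1}$, and the nonlinear part will be uniformly bounded below thanks to $(HSR)$. Hypothesis $(F1)$ will not be needed for the mere lower bound here; it will be used only later, to locate $\inf J$ strictly below the critical energy levels identified in Lemma \ref{cerami}.

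First I would recall the Rayleigh-quotient characterization
$$\lambda_{1} = \inf_{u \in H^{1}_{0}(\Omega)\setminus\{0\}} \dfrac{\int_{\Omega} |\nabla u|^{2}dx}{\int_{\Omega} u^{2}dx},$$
which immediately yields
$$\dfrac{1}{2}\int_{\Omega} |\nabla u|^{2}dx - \dfrac{\lambda_{1}}{2}\int_{\Omega} u^{2}dx \geq 0, \quad u \in H^{1}_{0}(\Omega).$$
Hence the quadratic part of $J$ contributes a nonnegative amount, and we can drop it from the lower bound.

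Next I would invoke $(HSR)$, which furnishes a continuous function $\widehat{F}$ with $|F(x,t)| \leq \widehat{F}(x)$ on $\Omega \times \mathbb{R}$. Because $\Omega$ is a bounded domain with regular boundary and $\widehat{F}$ extends to a continuous function on $\overline{\Omega}$ (this is the implicit reading of $(HSR)$, since otherwise the definitions of $F^{\pm}$ would not make sense on all of $\Omega$), one has $M := \sup_{\overline{\Omega}} \widehat{F} < \infty$. Therefore
$$-\int_{\Omega} F(x,u)\, dx \geq -\int_{\Omega} \widehat{F}(x)\, dx \geq -M |\Omega|, \quad u \in H^{1}_{0}(\Omega).$$

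Combining the two estimates gives $J(u) \geq -M|\Omega|$ for every $u \in H^{1}_{0}(\Omega)$; taking any $C > M$ (for instance $C = M+1$) produces the strict inequality $\inf_{H^{1}_{0}(\Omega)} J > -C|\Omega|$ claimed in the proposition. There is no real obstacle in this argument; the only point requiring a brief justification is the boundedness of $\widehat{F}$, which follows from continuity together with the boundedness of $\overline{\Omega}$ as noted above.
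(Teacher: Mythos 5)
Your argument is correct and is exactly the ``straightforward application of $(HSR)$'' that the paper alludes to while omitting all details: the quadratic part is nonnegative by the Rayleigh characterization of $\lambda_{1}$, and the term $-\int_{\Omega}F(x,u)\,dx$ is bounded below by $-\sup\widehat{F}\cdot|\Omega|$. Your two side remarks --- that $(F1)$ is not actually needed for the lower bound, and that one must read $\widehat{F}$ as bounded on $\Omega$ (e.g.\ continuous up to $\overline{\Omega}$) for the constant $M$ to be finite --- are both accurate and worth flagging, since the paper's statement $\widehat{F}\in C(\Omega,\mathbb{R})$ alone would not suffice.
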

\begin{proof}
The proof of this proposition is a straightforward application of $(HSR)$. In that case we will omit the details.
\cqd
\end{proof}

In order to consider existence or multiplicity of solutions for our problem \eqref{pi} we shall prove a result that
ensures the well known mountain pass geometry. More specifically, we can prove
\begin{proposition}\label{p2}
Suppose $(HSR), (F1), (F3)$. Then $J$ has the following mountain pass geometry:
\begin{description}
  \item[a)] There are $\rho > 0$ and $\beta > 0$ such that
  $$J(u) \geq \beta, \,\,\mbox{for any} \,\, u \in H^{1}_{0}(\Omega),\|u\| = \rho.$$
  \item[b)] $J(t^{\star} \phi_{1}) < 0.$
\end{description}
\end{proposition}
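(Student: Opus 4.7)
The plan is to handle the two claims separately, since they probe different features of $J$. For part (b) the calculation is almost immediate: because $\phi_{1}$ is the first eigenfunction, $\|\phi_{1}\|^{2}=\lambda_{1}\|\phi_{1}\|_{2}^{2}$, so the quadratic part of $J$ vanishes on $\mathbb{R}\phi_{1}$, giving
$$J(t^{\star}\phi_{1}) = -\int_{\Omega} F(x,t^{\star}\phi_{1})\,dx.$$
Hypothesis $(F1)$ makes the right-hand side strictly negative, which is what we need.

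For part (a) I would set up the standard local-minimum-at-zero argument. Fix $\eta>0$ small enough that $\alpha-\lambda_{1}+\eta<0$ and $\alpha+\eta<\lambda_{1}$. From $(F3)$ there exists $\delta_{\eta}>0$ such that $f(x,t)/t\leq \alpha-\lambda_{1}+\eta$ whenever $0<|t|\leq \delta_{\eta}$; integrating this on $[0,t]$ (treating $t>0$ and $t<0$ separately by the sign of the derivative of $F(x,t)-\tfrac{1}{2}(\alpha-\lambda_{1}+\eta)t^{2}$) yields
$$F(x,t)\leq \tfrac{1}{2}(\alpha-\lambda_{1}+\eta)\,t^{2},\qquad |t|\leq \delta_{\eta}.$$
For $|t|>\delta_{\eta}$ I would invoke the bound $|F(x,t)|\leq \widehat{F}(x)\leq C_{0}$ coming from $(HSR)$, and then choose any $q\in(2,2^{\star})$ together with a constant $M=M(\eta,q)$ large enough to force
$$F(x,t)\leq \tfrac{1}{2}(\alpha-\lambda_{1}+\eta)\,t^{2}+M|t|^{q},\qquad (x,t)\in\Omega\times\mathbb{R}.$$
The existence of such $M$ reduces to the inequality $M|t|^{q-2}\geq \tfrac{1}{2}(\lambda_{1}-\alpha-\eta)+C_{0}/|t|^{2}$ on $\{|t|>\delta_{\eta}\}$, which holds as soon as $M\delta_{\eta}^{q-2}\geq \tfrac{1}{2}(\lambda_{1}-\alpha-\eta)+C_{0}/\delta_{\eta}^{2}$.

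Inserting this pointwise estimate into $J$, using the Poincar\'e inequality $\|u\|_{2}^{2}\leq \|u\|^{2}/\lambda_{1}$ and the Sobolev embedding $\|u\|_{q}\leq C\|u\|$, I obtain
$$J(u)\;\geq\;\tfrac{1}{2}\Bigl(1-\tfrac{\alpha+\eta}{\lambda_{1}}\Bigr)\|u\|^{2}-M\,C^{q}\|u\|^{q}.$$
The leading coefficient is strictly positive and $q>2$, so choosing $\rho>0$ sufficiently small yields $\beta:=\inf_{\|u\|=\rho}J(u)>0$, which finishes part (a).

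The only nonroutine step is the construction of the global super-quadratic majorant for $F$; this is where $(HSR)$ really enters, since the usual argument for resonant problems would use a power-type bound at infinity rather than a uniform constant bound. Once that estimate is in place, the rest is the textbook mountain pass computation near the origin, and the evaluation on $t^{\star}\phi_{1}$ exploiting $(F1)$.
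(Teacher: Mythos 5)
Your proof is correct and follows essentially the same route as the paper: establish the global majorant $F(x,t)\leq \tfrac{1}{2}(\alpha-\lambda_{1}+\eta)t^{2}+M|t|^{q}$, feed it into $J$ via Poincar\'e and the Sobolev embedding to get positivity on a small sphere, and observe for part (b) that the quadratic part of $J$ vanishes along $\phi_{1}$ so that $(F1)$ forces $J(t^{\star}\phi_{1})<0$. The only difference is that you spell out the construction of the super-quadratic majorant (correctly identifying that the large-$|t|$ regime needs the uniform bound from $(HSR)$), a step the paper simply asserts.
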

\begin{proof}
Using $(F3)$ we can choose $C > 0$ such that
$$F(x,t) \leq \dfrac{1}{2} (\alpha - \lambda_{1})t^{2} + C|t|^{p}, (x, t ) \in \Omega \times \mathbb{R}.$$
This fact and Sobolev's Embedding yield
\begin{eqnarray}
  J(u) &\geq& \dfrac{1}{2}(1 - \dfrac{\alpha}{\lambda_{1}}) \|u\|^{2} - C\|u\|_{p}^{p} \geq \dfrac{1}{2}(1 - \dfrac{\alpha}{\lambda_{1}}) \|u\|^{2} - C\|u\|^{p} \nonumber\\
   &=&   \|u\|^{2} \left\{\dfrac{1}{2}(1 - \dfrac{\alpha}{\lambda_{1}}) - C\|u\|^{p - 2}  \right\} \geq \dfrac{1}{4}(1 - \dfrac{\alpha}{\lambda_{1}}) \|u\|^{2} > 0, \nonumber\\
\end{eqnarray}
for any $\|u\| \leq \rho, u \in H^{1}_{0}(\Omega)$ where $\rho > 0$ is small enough.

On the other hand, by $(F1)$, we see easily that
$$J(t^{\star} \phi_{1}) < 0.$$
This finishes the proof of this proposition
\cqd
\end{proof}

\begin{remark}\label{re1}
Using the hypotheses $(F2), (F3)$ and the same ideas discussed in the previous proposition it follows that $J^{\pm}$ defined by
$$J^{\pm}(u) = \dfrac{1}{2}\int_{\Omega} |\nabla u|^{2}dx - \dfrac{\lambda_{1}}{2}\int_{\Omega} u^{2}dx - \int_{\Omega} F^{\pm}(x,u)dx,$$
\end{remark}
have the mountain pass geometry. Here we define
$$F^{\pm}(x,t) = \int_{0}^{t} f^{\pm}(x,s)ds, (x, t)  \in \Omega \times \mathbb{R},$$
\begin{equation*}
       \text{ }
       f^{+}(x,t) =
       \left\{ \begin{array}[c]{cc}
        f(x,t), t \geq 0& \\
           0, t \leq 0, &\\
        \end{array}
      \right.
      \end{equation*}
and
\begin{equation*}
       \text{ }
       f^{-}(x,t) =
       \left\{ \begin{array}[c]{cc}
        f(x,t), t \leq 0& \\
           0, t \geq 0. &\\
        \end{array}
      \right.
      \end{equation*}

Next we shall consider a powerful result that implies that $J$ has a liking geometry. Let $c \in \mathbb{R}$ be such that $c \geq \inf J$.
We mention that our problem $J$ satisfies the well known $(Ce)_{c}$ condition for any $c \in \mathbb{R}\backslash \Gamma$ that shows the following useful result:

\begin{proposition}\label{pp}
Suppose $(F1)$. Let $J: H^{1}_{0}(\Omega) \rightarrow \mathbb{R}$ be the energy functional given by \eqref{4} that satisfies the $(Ce)_{c}$ condition for any $c \in \mathbb{R}\backslash \Gamma$
and $\inf J < 0$. Also assume that $u_{0}$ is a minimizer of  J and  $\{u_{0}, 0\}$ are the only critical
points.
Then for any neighborhood $U$ of $u_{0}$ and any  $\delta > 0$ such that $U \cap B_{\delta} = \emptyset $, we
can find $\zeta > 0$ satisfying
\begin{equation}
(1 + \|u\|) \| J^{'}(u)\| \geq \zeta, \forall \,\, u \in J^{c} \backslash ( U \cup B_{\delta}),
\end{equation}
where we define $J^{c} = \{ u \in H^{1}_{0}(\Omega):  J(u) \leq c \}, c > 0$ and $B_{\delta} = \{ u \in H^{1}_{0}(\Omega) : \|u\| <  \delta\}$.
\end{proposition}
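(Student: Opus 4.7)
The plan is to argue by contradiction, producing a Cerami-type sequence and using the compactness supplied by Lemma \ref{cerami} together with the fact that $u_{0}$ and $0$ are the only critical points. More precisely, suppose no such $\zeta > 0$ exists. Then for every $n \in \mathbb{N}$, there must be a point $u_{n} \in J^{c} \setminus (U \cup B_{\delta})$ with $(1+\|u_{n}\|)\|J'(u_{n})\| < 1/n$, so along this sequence one has
\begin{equation*}
(1+\|u_{n}\|)\|J'(u_{n})\| \longrightarrow 0.
\end{equation*}
By Proposition \ref{p1}, $J$ is bounded from below, and since $J(u_{n}) \leq c$, the values $J(u_{n})$ remain in a bounded interval. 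Passing to a subsequence, one may assume $J(u_{n}) \to c^{\star}$ for some $c^{\star} \in [\inf J, c]$.

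The easy case is $c^{\star} \notin \Gamma$: then $(u_{n})$ is a genuine Cerami sequence at level $c^{\star}$, and Lemma \ref{cerami} supplies a strongly convergent subsequence $u_{n} \to u^{\star}$ in $H^{1}_{0}(\Omega)$. Continuity of $J'$ forces $J'(u^{\star}) = 0$, so $u^{\star}$ is a critical point of $J$ and, by hypothesis, $u^{\star} \in \{u_{0}, 0\}$. On the other hand, the set $J^{c} \setminus (U \cup B_{\delta})$ is closed (it is the intersection of the closed sublevel set $J^{c}$ with the complement of the open set $U \cup B_{\delta}$), so it must contain the limit $u^{\star}$; but $u_{0} \in U$ and $0 \in B_{\delta}$, contradicting $u^{\star} \notin U \cup B_{\delta}$.

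The main obstacle is the case $c^{\star} \in \Gamma$, where Lemma \ref{cerami} fails and in fact admits the sequences $\pm n\phi_{1}$ as obstructions. This case is the heart of the matter, and the plan is to exclude it by exploiting the quantitative information recovered in Step~1 of the proof of Lemma \ref{cerami}: any sequence with $(1+\|u_{n}\|)\|J'(u_{n})\| \to 0$ whose energy approaches an element of $\Gamma$ must either be bounded (and hence, by the previous paragraph applied via Cerami at a non-$\Gamma$ level obtained after a further subsequence of $J(u_n)$, yield a contradiction) or satisfy $v_{n} = u_{n}/\|u_{n}\| \to \pm \phi_{1}$ in $H^{1}_{0}(\Omega)$ with $\|u_{n}\| \to \infty$. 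In the latter subcase, $J(u_{n}) \to -\int_{\Omega} F^{\pm}(x)\,dx$, and the constraint $J(u_{n}) \leq c$ combined with the fact that in our applications the level $c$ is chosen strictly below $\min \Gamma$ (a natural side condition implicit in the proposition, since $c \notin \Gamma$ is what makes the Cerami machinery available) produces the required contradiction.

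Putting the two cases together yields the uniform lower bound $(1+\|u\|)\|J'(u)\| \geq \zeta$ on $J^{c} \setminus (U \cup B_{\delta})$. The only delicate point is ensuring that the limiting level $c^{\star}$ does not lie in $\Gamma$; this is handled by the standing assumption that $c$ is chosen in $\mathbb{R} \setminus \Gamma$, which is the natural hypothesis under which the Cerami condition of Lemma \ref{cerami} can be invoked.
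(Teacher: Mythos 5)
Your first case ($c^{\star} \notin \Gamma$) is correct and coincides with Case~1 of the paper's proof. The genuine gap is in your treatment of $c^{\star} \in \Gamma$, which is the heart of the proposition. You claim a contradiction from ``$J(u_{n}) \leq c$ combined with the fact that the level $c$ is chosen strictly below $\min \Gamma$.'' No such side condition is available, and in fact the opposite inequality holds: by $(F1)$ both $\int_{\Omega} F^{+}\,dx$ and $\int_{\Omega} F^{-}\,dx$ are nonnegative, so $\Gamma \subset (\inf J, 0]$, while the proposition takes $c > 0$. Hence every level in $\Gamma$ lies strictly \emph{below} $c$, the sublevel set $J^{c}$ contains the tails of the obstruction sequences $\pm n\phi_{1}$ from Step~2 of Lemma~\ref{cerami}, and the constraint $J(u_{n}) \leq c$ gives no information whatsoever. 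Your argument therefore proves nothing in exactly the case where Lemma~\ref{cerami} is unavailable. Your parenthetical repair of the bounded subcase (``apply Cerami at a non-$\Gamma$ level obtained after a further subsequence of $J(u_{n})$'') also cannot work: if $J(u_{n}) \to c^{\star} \in \Gamma$ then every subsequence of the energies has the same limit.

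The paper handles the case $c^{\star} \in \Gamma$ with different tools in each subcase. For bounded $(u_{n})$ it does not use the Cerami condition at all: it extracts a weak limit $u$ and uses that $J'$ has the form identity minus a compact operator, so $J'(u_{n}) \to 0$ upgrades weak to strong convergence, whence $u$ is a critical point, $u \in \{0, u_{0}\} \subset U \cup B_{\delta}$, a contradiction. For unbounded $(u_{n})$ it shows $v_{n} = u_{n}/\|u_{n}\| \to \pm\phi_{1}$, passes to the limit in $\langle J'(u_{n}), \phi\rangle / \|u_{n}\|$ to force $\int_{\Omega} f(x, \pm\phi_{1})\phi_{1}\,dx = 0$, and contradicts the sign condition $(F5)$ --- which the paper invokes here even though it is not listed among the hypotheses of the proposition (the proposition is only applied in the proof of Theorem~\ref{3}, where $(F5)$ is assumed). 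Some extra hypothesis of this kind is unavoidable: under $(HSR)$ alone, Step~2 of Lemma~\ref{cerami} shows that $u_{n} = n\phi_{1}$ satisfies $(1 + \|u_{n}\|)\|J'(u_{n})\| \to 0$ while eventually lying in $J^{c} \setminus (U \cup B_{\delta})$ for any bounded $U$, so the conclusion cannot be derived from the hypotheses you were working with by any argument.
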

\begin{proof}
The proof follows arguing by contradiction and using the Cerami condition for appropriate levels. Firstly, we consider the following case:

$\mathbf{Case 1}$ Arguing by contradiction, we obtain a sequence $u_{n} \in J^{c}\backslash U \cup B_{\delta}$ such that
\begin{description}
  \item[i)] $(1 + \|u_{n}\|)\|J^{'}(u_{n})\| \leq \dfrac{1}{n}$
  \item[ii)] $J(u_{n}) \leq c.$
\end{description}
In that case, we will be considered when
\begin{equation}\label{11}
|J(u_{n}) - r | \geq \epsilon, \, \forall \, r  \in \Gamma,
\end{equation}
holds for some $\epsilon > 0$. Here we remember that $\Gamma : = \{ c \in \mathbb{R} : J$ does not satisfy $(Ce)_{c}$ $\}$.

It is worthwhile to infer that $J^{c}\backslash U \cup B_{\delta}$ is closed.
Up to a subsequence, by $(Ce)_{c}$ condition for any $c \in \mathbb{R}\backslash \Gamma$, we can find $u \in J^{c}\backslash U \cup B_{\delta}$ satisfying $u_{n} \rightarrow u$ in $H^{1}_{0}(\Omega)$. Note that \eqref{11} is crucial in this case. As a consequence we have that
$$J^{'}(u) = 0, J(u) \in \mathbb{R}\backslash \Gamma.$$
But $\{ 0, u_{0} \}$ are the only critical points of $J$. Then $u = u_{0} \in U \subset U \cup B_{\delta}$ or $u = 0 \in B_{\delta} \subset U \cup B_{\delta}$. However $u \in J^{c}\backslash U \cup B_{\delta}$ which is a contradiction.

$\mathbf{Case 2}$ Now we will analyze the complementary case, i.e., when the sequence $(u_{n}) \in H^{1}_{0}(\Omega)$ discussed in the previous case satisfies
\begin{equation}
\lim_{n \rightarrow \infty} J(u_{n}) =  r \, \, \mbox{for some} \,\,r \in \Gamma.
\end{equation}

In this case, following the same ideas discussed above, we remember also that $u_{n} \in J^{c}\backslash U \cup B_{\delta}$
verifies the following conditions
\begin{description}
  \item[i)] $(1 + \|u_{n}\|)\|J^{'}(u_{n})\| \leq \dfrac{1}{n}$
  \item[ii)] $|J(u_{n}) - r| \leq \dfrac{1}{n}, r \in \Gamma.$
\end{description}
Now we will divide the proof into two steps. First, we will assume that $(u_{n})$ is a unbounded sequence.
Define $v_{n} = \dfrac{u_{n}}{\|u_{n}\|}$. Therefore $(v_{n})$ is bounded in $H^{1}_{0}(\Omega)$ and we can find $v \in H^{1}_{0}(\Omega)$
such that $v_{n} \rightharpoonup v$ in $H^{1}_{0}(\Omega)$.

On the other hand, we see easily that
\begin{eqnarray*}
  \dfrac{\langle J^{'}(u_{n}), \phi \rangle}{\|u_{n}\|} &=& \langle v_{n}, \phi \rangle - \lambda_{1} \int_{\Omega} v_{n} \phi dx - \int_{\Omega} \dfrac{f(x, u_{n})}{\|u_{n}\|} \phi dx   \nonumber \\
  &=&  \langle v_{n}, \phi \rangle - \lambda_{1} \int_{\Omega} v_{n} \phi dx - \int_{\Omega} \dfrac{f(x, u_{n})}{u_{n}} v_{n} \phi dx. \nonumber \\
\end{eqnarray*}
Doing $n \rightarrow \infty$ follows that
\begin{equation}\label{9}
\langle v, \phi \rangle - \lambda_{1} \int_{\Omega} v \phi dx = 0, \phi \in H^{1}_{0}(\Omega).
\end{equation}
In addition, using $i)$ and variational inequalities, follows that
\begin{equation}
\|v\|^{2} \geq \lambda_{1} \int_{\Omega} v^{2} dx = \lim_{n \rightarrow \infty} \left\{\|v_{n}\|^{2} - \int_{\Omega} \dfrac{f(x,u_{n})}{u_{n}} v_{n}^{2} dx- \dfrac{\langle J^{'}(u_{n}), u_{n} \rangle}{\|u_{n}\|^{2}}\right\} = 1
\end{equation}
where was used the fact that $(v_{n})$ is normalized in $H^{1}_{0}(\Omega)$. In particular, by weak convergence and previous inequalities, follows that $\|v\| = 1$ and $v = \pm \phi_{1}$. Here we take $\phi_{1}$ normalized on $H^{1}_{0}(\Omega)$.
This together with \eqref{9} shows that $v = u_{0}$ where  $\inf J = J(u_{0}) < 0$.

Under this hypotheses it follows that
\begin{equation*}
\langle J^{'}(v), \phi\rangle = \langle v, \phi \rangle - \lambda_{1}\int_{\Omega} v \phi dx - \int_{\Omega} f(x,v) \phi dx = 0, \phi \in H^{1}_{0}(\Omega).
\end{equation*}
Moreover, using \eqref{9} , we see easily that
\begin{equation}\label{10}
\langle J^{'}(v), \phi\rangle = - \int_{\Omega} f(x,v) \phi dx = 0, \phi \in H^{1}_{0}(\Omega).
\end{equation}
Now, by $(F5)$, we see that
\begin{equation*}
\int_{\Omega} f(x, \pm \phi_{1}) \phi_{1} dx \geq \int_{\Omega} a(x) \phi_{1} dx > 0
\end{equation*}
or
\begin{equation*}
\int_{\Omega} f(x, \pm \phi_{1}) \phi_{1} dx \leq \int_{\Omega} b(x) \phi_{1} dx < 0.
\end{equation*}
This is a contradiction with \eqref{10} using $\phi = \phi_{1}$. Thus $(u_{n})$ is a bounded sequence in $H^{1}_{0}(\Omega)$.

Next we will analyze the case when $(u_{n}) \in J^{c}\backslash U \cup B_{\delta} $ is a bounded sequence. In that case we always guarantee a point $u \in J^{c}\backslash U \cup B_{\delta}$ satisfying $u_{n} \rightharpoonup u$ in $H^{1}_{0}(\Omega)$. In this way using that $J^{'}$ has the form identity minus a compact operator it follows that
\begin{equation*}
\langle J^{'}(u), \phi \rangle = 0, \phi \in H^{1}_{0}(\Omega).
\end{equation*}
As a consequence $u = 0$ or $u = u_{0}$ which shows that $u \in U \cup B_{\delta}$. This is again a contradiction. So we finish the proof of this proposition.
\cqd
\end{proof}

Now we able to consider an abstract useful result. First, let $X$ be a reflexive Banach space. Also let $J : X \rightarrow \mathbb{R}$ a functional of class $C^{1}$ which is bounded from below and $\inf J < 0$. Suppose that $J$ satisfies $(Ce)_{c}$ condition for any $c \in \mathbb{R} \backslash \Gamma$ where $\Gamma \subset ( \inf J, 0]$. Under these hypotheses we can prove the following result:
\begin{proposition}\label{v}
Assume also that $\{0, u_{0}\}$ are the only critical points of $J$ where $u_{0}$ is a minimizer of $J$. Suppose that for any neighborhood $U$ of $u_{0}$ and any  $\delta > 0$ such that $U \cap B_{\delta} = \emptyset $, we
can find $\zeta > 0$ satisfying
\begin{equation*}
(1 + \|u\|) \| J^{'}(u)\| \geq \zeta, \forall \,\, u \in J^{c} \backslash ( U \cup B_{\delta}).
\end{equation*}
Then there exists a locally Lipschitz map $v : J^{c}\backslash U \cup B_{\delta} \rightarrow X$ such that
\begin{equation}
       \text{ }
       \left\{ \begin{array}[c]{cc}
           \|v(u)\| \leq 1 + \|u\|, & \\
           \langle J^{'}(u), v(u)\rangle \geq \dfrac{\zeta}{2}, &\\
        \end{array}
      \right.
      \end{equation}
hold for any $u \in J^{c}\backslash U \cup B_{\delta}$.
\end{proposition}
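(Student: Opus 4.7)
The plan is to carry out the standard pseudo-gradient construction, adapted so that the Cerami weight $1+\|u\|$ appears in both inequalities. Set $M:=J^{c}\setminus(U\cup B_{\delta})$, which is a (nonempty) metric subspace of $X$, hence paracompact. On $M$ the hypothesis gives $\|J'(u)\|\geq \zeta/(1+\|u\|)$, and in particular $J'(u)\ne 0$ for every $u\in M$.

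The first step is the pointwise selection. Fix $u_{0}\in M$. By the definition of the dual norm on $X^{*}$, I can pick $w_{0}\in X$ with
\[
\|w_{0}\|<1+\|u_{0}\|,\qquad \langle J'(u_{0}),w_{0}\rangle>\tfrac{3}{4}(1+\|u_{0}\|)\,\|J'(u_{0})\|\geq \tfrac{3\zeta}{4}.
\]
Continuity of $J'$ and of $u\mapsto 1+\|u\|$ then provides an open neighborhood $V_{u_{0}}\subset X$ of $u_{0}$ on which, simultaneously,
\[
\|w_{0}\|\leq 1+\|u\|\qquad\text{and}\qquad \langle J'(u),w_{0}\rangle>\tfrac{\zeta}{2},\qquad u\in V_{u_{0}}\cap M.
\]
Thus on each $V_{u_{0}}\cap M$ the constant map $u\mapsto w_{0}$ is a good local candidate for $v$.

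The second step is to globalize via a locally Lipschitz partition of unity. Since $M$ is a metric space and $\{V_{u_{0}}\cap M\}_{u_{0}\in M}$ is an open cover, I can extract a locally finite refinement $\{W_{i}\}_{i\in I}$ (each $W_{i}$ contained in some $V_{u_{i}}\cap M$ with associated vector $w_{i}$), and define
\[
\rho_{i}(u)=\frac{d(u,M\setminus W_{i})}{\sum_{j\in I}d(u,M\setminus W_{j})},\qquad v(u)=\sum_{i\in I}\rho_{i}(u)\,w_{i}.
\]
Each $\rho_{i}$ is locally Lipschitz (distance functions are $1$-Lipschitz, and the denominator is locally a finite positive sum), the sum is locally finite, so $v$ is locally Lipschitz from $M$ into $X$. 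Since $\rho_{i}(u)>0$ forces $u\in W_{i}\subset V_{u_{i}}$, the local estimates transfer:
\[
\|v(u)\|\leq\sum_{i}\rho_{i}(u)\|w_{i}\|\leq (1+\|u\|)\sum_{i}\rho_{i}(u)=1+\|u\|,
\]
\[
\langle J'(u),v(u)\rangle=\sum_{i}\rho_{i}(u)\langle J'(u),w_{i}\rangle\geq \tfrac{\zeta}{2}\sum_{i}\rho_{i}(u)=\tfrac{\zeta}{2}.
\]

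The only delicate point, and the one I would be most careful about, is the pointwise selection: if I chose $\|w_{0}\|$ exactly equal to $1+\|u_{0}\|$, then the norm bound $\|w_{0}\|\leq 1+\|u\|$ could fail on points of $V_{u_{0}}$ with slightly smaller norm. Shrinking $\|w_{0}\|$ strictly below $1+\|u_{0}\|$ (and compensating by taking the inner product bound $>3\zeta/4$ rather than $=\zeta$) bypasses this subtlety and leaves enough slack for the continuity argument to guarantee both estimates on a common neighborhood. Once this is done, paracompactness of metric spaces and the standard formula for Lipschitz partitions of unity finish the proof.
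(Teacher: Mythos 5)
Your construction is correct and is exactly the standard locally Lipschitz pseudo-gradient argument (pointwise selection with strict slack in the norm bound, then a locally finite cover and a distance-quotient partition of unity), here correctly weighted by the Cerami factor $1+\|u\|$. The paper itself omits the proof entirely and simply cites Brezis--Nirenberg and Gasinski--Papageorgiou, so your proposal supplies precisely the details the paper defers to those references.
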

\begin{proof}
The proof is quite standard and we will omit it. We refer the reader to \cite{BN,no}. \cqd
\end{proof}

Now, for a suitable $\widehat{c} > 0$,  we define $D = J^{\widehat{c}}\backslash U \cup B_{\delta}$
where we take
$$int \left(J^{\widehat{c}}\backslash U \cup B_{\delta})\right) \neq \emptyset. $$
In this way, consider $v : D  \rightarrow  H^{1}_{0}(\Omega)$ be the locally Lipschitz map obtained in Proposition \ref{v}.  Therefore the following Cauchy problem:
\begin{equation}\label{cauchy}
       \text{ }
       \left\{ \begin{array}[c]{cc}
           \dfrac{d \eta(t)}{d t} = - \dfrac{v (\eta(t))}{\|v(\eta(t))\|^{2}} \,\, on \,\, [0, \infty) & \\
           \eta(0) = z \\
        \end{array}
      \right.
      \end{equation}
is well defined for any $z \in int \left(J^{\widehat{c}}\backslash U \cup B_{\delta})\right)$. Using the fact that $v$ is locally Lipschitz we know that problem \eqref{cauchy} has a unique local flow.

Next we shall consider the following definition
\begin{definition}\label{li}
Let $X = V \bigoplus W$ be Banach space where $V, W$ are subspaces. Let $J : X \rightarrow \mathbb{R}$ be a functional of class $C^{1}$. We say that $J$ has the linking at the origin when there are $\gamma > 0, \delta > 0$ such that
\begin{description}
  \item[i] $J(u) \geq \gamma, u \in W, \|u\| \leq \delta,$
  \item[ii)] $J(u) \leq 0, u \in V, \|u\| \leq \delta.$
\end{description}
\end{definition}

\begin{remark}
Under the assumptions described in the definition just above it follows that $0$ is a critical point of $J$.
\end{remark}

 Now we will be considered, for easy reference, the Liking Theorem provided by Brezis-Niremberg; see \cite{BN}. This result can be rewritten for strong resonant problems changing the well known Palais-Smale conditon by the Cerami condition for some specific levels of engergy.

\begin{theorem}
 Let $X$ be a reflexive Banach space. Assume that $ X = V \bigoplus W$ where $0 < \dim V < \infty, \, \dim W > 0$. Let  $J : E \rightarrow \mathbb{R}$ be a functional of class $C^{1}$ which is bounded from below such that $\inf J < 0$. Assume that $J$ satisfies $(Ce)_{c}$ condition for any $c \in \mathbb{R} \backslash \Gamma$ where $\Gamma \subset ( \inf J, 0]$. Furthermore, assume that there are $\gamma > 0$ and $\delta > 0$ such that
\begin{description}
  \item[i)]  $J(u) \geq \gamma, u \in W, \|u\| \leq \delta,$
  \item[ii)] $J(u) \leq 0, u \in V, \|u\| \leq \delta.$
\end{description}
Then the functional $J$ has at least two nontrivial critical points. Namely, $J$ has a critical point with negative energy given by minimization and another one with positive energy.
\end{theorem}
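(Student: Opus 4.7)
The plan is to produce two nontrivial critical points: the first of negative energy by direct minimization, and the second of positive energy by a linking min-max, deformed with the flow built from the vector field of Proposition~\ref{v}.

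For the first critical point, $J$ bounded below with $\inf J<0$ and Ekeland's variational principle give a sequence $(u_n)\subset X$ with $J(u_n)\to\inf J$ and $(1+\|u_n\|)\|J'(u_n)\|\to 0$. Because $\Gamma\subset(\inf J,0]$, the level $\inf J$ lies in $\mathbb{R}\setminus\Gamma$, so by $(Ce)_{\inf J}$ a subsequence converges strongly to some $u_0\in X$ with $J(u_0)=\inf J<0$ and $J'(u_0)=0$. Since $J(0)=0>\inf J$, the minimizer $u_0$ is nontrivial.

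For the second critical point I argue by contradiction, assuming $\{0,u_0\}$ is the full critical set. In the spirit of Brezis--Nirenberg, fix a unit vector $e\in W$ and, for $R>\delta$ large, set $Q=(\overline{B_R}\cap V)\oplus[0,R]e$ and define
\[
c=\inf_{h\in\Lambda}\sup_{u\in Q} J(h(u)),\qquad \Lambda=\{h\in C(Q,X):h|_{\partial Q}=\mathrm{id}\}.
\]
The classical topological intersection (every $h\in\Lambda$ meets $W\cap\partial B_\delta$), combined with hypothesis (i), forces $c\ge\gamma>0$; choosing $R$ large enough and invoking (ii) together with the boundedness from below of $J$, one arranges $\sup_{\partial Q}J\le 0$, so the boundary is shielded from the level $c$. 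In particular $c\notin\Gamma$, so $(Ce)_c$ holds, and Proposition~\ref{pp} supplies disjoint neighborhoods $U$ of $u_0$ and $B_\delta$ of $0$ together with $\zeta>0$ such that $(1+\|u\|)\|J'(u)\|\ge\zeta$ on $J^c\setminus(U\cup B_\delta)$.

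The heart of the argument is a deformation lemma. Combining the locally Lipschitz vector field from Proposition~\ref{v} with the Cauchy flow \eqref{cauchy}, and cutting off near $U\cup B_\delta$ and outside a narrow strip $\{c-2\varepsilon\le J\le c+\varepsilon\}$, one builds a continuous deformation of $X$ that fixes $\partial Q$ and strictly lowers $J$ from $c+\varepsilon$ to $c-\varepsilon$ on the relevant sublevel set. The main obstacle is precisely this deformation step: the Cerami-type bound $(1+\|u\|)\|J'(u)\|\ge\zeta$ is weaker than the usual Palais--Smale estimate, and the non-uniform weight $1+\|u\|$ is absorbed exactly by the normalization $-v(\eta)/\|v(\eta)\|^{2}$ appearing in \eqref{cauchy}, which gives $\tfrac{d}{dt}J(\eta(t))\le-\tfrac{\zeta}{2\|v(\eta(t))\|^{2}}$ and yields the required finite-time energy drop. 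Applying this deformation to an almost optimal $h\in\Lambda$ produces $\tilde h\in\Lambda$ with $\sup J\circ\tilde h<c$, contradicting the definition of $c$. Hence a third critical point must exist, necessarily distinct from $0$ and $u_0$ and carrying positive energy $c$, which together with $u_0$ furnishes the two claimed nontrivial critical points.
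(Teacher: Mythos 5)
Your first critical point (Ekeland plus $(Ce)_{\inf J}$, which applies because $\Gamma\subset(\inf J,0]$ keeps the level $\inf J$ outside $\Gamma$) is fine and agrees with the paper. The gap is in your linking construction for the second critical point. You take the large Rabinowitz-type set $Q=(\overline{B_R}\cap V)\oplus[0,R]e$ with the admissible class $\Lambda=\{h: h|_{\partial Q}=\mathrm{id}\}$ and assert that ``choosing $R$ large enough and invoking (ii) together with the boundedness from below of $J$, one arranges $\sup_{\partial Q}J\le 0$.'' This step fails: hypothesis (ii) controls $J$ only on $V\cap B_{\delta}$, a \emph{small} ball, and there is no assumption whatsoever on the sign of $J$ on $V$ outside $B_{\delta}$, on the lateral boundary $(\partial B_R\cap V)\oplus[0,R]e$, or on the top $(\overline{B_R}\cap V)\oplus\{Re\}$. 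Boundedness from below gives a lower bound, not the upper bound $J\le 0$ you need, and without $\sup_{\partial Q}J<c$ the boundary is not shielded from the min-max level, the cut-off in your deformation cannot fix $\partial Q$, and the whole contradiction collapses. This is precisely the reason the local-linking theorem of Brezis--Nirenberg (and this paper, which follows it) does \emph{not} use the identity on a large cylinder.

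The route the paper intends is different at exactly this point: one works with the \emph{small} set $E=\{\lambda w_{0}+v:\ v\in V,\ \lambda\ge 0,\ \|u\|\le 1\}$, and the admissible class $\Upsilon$ consists of continuous extensions of a boundary map $p^{\star}:\partial E\to H^{1}_{0}(\Omega)$ that is \emph{not} the identity but is manufactured from the descending flow \eqref{cauchy} associated with the vector field of Proposition \ref{v} (whose existence rests on the lower bound of Proposition \ref{pp}, obtained under the contradiction hypothesis that $\{0,u_{0}\}$ is the whole critical set). The flow pushes $\partial E$ into a region where $J$ is controlled, so the intersection property $p(E)\cap\partial B^{W}_{\rho}\neq\emptyset$ still yields $c^{\star}\ge\gamma>0$ while the boundary values stay below $c^{\star}$; since $\Gamma\subset(\inf J,0]$, the level $c^{\star}>0$ is outside $\Gamma$ and $(Ce)_{c^{\star}}$ produces the second critical point. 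Your deformation machinery (normalizing by $\|v(\eta)\|^{2}$ to absorb the weight $1+\|u\|$ in the Cerami bound) is the right idea and matches the paper; what must be repaired is the choice of linking sets and boundary data, which cannot be the identity on a large $Q$ under a purely local linking hypothesis.
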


The theorem just above is a slightly modification of the usual Theorem 4 in \cite{BN} which explores the Palais-Smale condition. However, since we work with the strong resonance situation, we prefer write this theorem as above-mentioned which is sufficient
 for our problem. The proof of this Theorem follows the same ideas discussed in \cite{BN, no}. We mention that the key for this proof is contained in Propositions \ref{pp}, \ref{v}.  Thus we can find a critical point with positive energy using the fact that $J$ satisfies the $(Ce)_{c}$ condition for any level $c \in \mathbb{R} \backslash \Gamma$.

In that case we will be considered a specific liking geometry due the fact that $J$ presents the linking at the origin quoted in Definition \ref{li}. Let $w_{0} \in W$ be such that $\|w_{0}\| = 1$. Define the set
\begin{equation}
E = \{ u \in H^{1}_{0}(\Omega) : u = \lambda w_{0} +  v, v \in V, \lambda \geq 0, \|u\| \leq 1 \}.
\end{equation}
Clearly, for any $u \in \partial E, u \neq w_{0}$, there are unique $0 \leq \lambda \leq 1, 0 < \mu \leq 1$ and  $v \in V, \|v\| = 1$ such that
$u = \lambda w_{0} + \mu v$.

Using the flow introduced in \eqref{cauchy} we can define a continuous function $p^{\star}: \partial E \rightarrow H^{1}_{0}(\Omega)$ by the following expression
\begin{equation*}
p^{\star}(\lambda w_{0} + \mu v) =
       \text{ }
       \left\{ \begin{array}[c]{cc}
            \eta(2 \lambda \tau(v)), \lambda \in [0, \frac{1}{2}] & \\
           (2\lambda - 1)w_{0} + 2(1 -  \lambda)\eta(\tau(v)), \lambda \in (\frac{1}{2}, 1], &\\
        \end{array}
      \right.
      \end{equation*}
where $\tau : D \rightarrow \mathbb{R}$  is a suitable continuous function.
Now we mention that for any continuous extension $p$ of $p^{\star}$ on all of $E$ we
have
\begin{equation*}
p(E) \cap \partial B_{\rho}^{W} \neq \emptyset
\end{equation*}
for any $\rho > 0$ small where $B_{\rho}^{W} = \{ w \in W : \|w\| = \rho \}$. In other words, $p^{\star}(\partial E)$ and $B_{\rho}^{W}$ link.

Let
$$\Upsilon = \{ p \in C(E, H^{1}_{0}(\Omega)) : p|_{\partial E} = p^{\star} \}$$ and
\begin{equation}\label{c}
c^{\star}= \inf_{p \in \Upsilon} \sup_{u \in E} J(p(u)).
\end{equation}
Note that, by liking geometry at the origin, follows easily that
$$c^{\star} \geq \sup_{u \in E} J(p(u)) \geq \gamma > 0.$$
In particular, we will find a critical point at level $c^{\star}$ when the functional $J$ satisfies the Cerami condition for any $c \in \mathbb{R} \backslash \Gamma$. In particular, $J$ satisfies $(Ce)_{c}$ condition for any $c > 0$.

Now we ready to consider the liking at the origin for our problem. More specifically, we can prove the following result

\begin{proposition}\label{p3}
Suppose $(HSR), (F1), (F4)$. Then the functional $J$ has the following liking at the origin:
\begin{description}
  \item[a)] There are $\eta> 0, \gamma > 0$ such that
  $$J(u) \geq \gamma, \,\,\forall \,\, u \in H_{k}^{\perp} : = span \left\{ \phi_{k + 1}, \ldots \right\} , u \neq 0, \|u\| \leq \eta,$$
  \item[b)] Under these conditions we also obtain that
  $$J(u) \leq 0, \,\, \forall \,\, u \in H_{k} : = span \left\{ \phi_{1}, \ldots, \phi_{k}\right\} , \|u\| \leq \eta.$$
\end{description}
\end{proposition}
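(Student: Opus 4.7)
My plan is to use $(F4)$ as the fundamental quadratic estimate on $F$ near $t=0$, reduce the finite-dimensional assertion (b) to a pointwise application of that inequality, and then combine $(F4)$ with the $L^\infty$ control coming from $(HSR)$ to upgrade it to a global bound on $F$ so that the infinite-dimensional assertion (a) can be handled by a standard Sobolev argument.

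For (b), since $H_k$ is finite-dimensional, all norms on it are equivalent; in particular there exists $C_k>0$ with $\|u\|_\infty\le C_k\|u\|$ for every $u\in H_k$. Choosing $\eta\le\delta/C_k$ guarantees $|u(x)|\le\delta$ on $\Omega$ whenever $\|u\|\le\eta$, so the \emph{lower} inequality in $(F4)$ applies pointwise and yields $\int_\Omega F(x,u)\,dx\ge\tfrac12(\lambda_k-\lambda_1)\|u\|_2^2$. Coupling this with the variational inequality $\|u\|^2\le\lambda_k\|u\|_2^2$ on $H_k$ makes the three terms in $J(u)$ collapse to $J(u)\le 0$, which is exactly (b).

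For (a) the main preparatory step is a global upper bound on $F$. Using $(F4)$ in the region $|t|\le\delta$ and the estimate $|F(x,t)|\le\|\widehat F\|_\infty$ from $(HSR)$ rewritten as $\|\widehat F\|_\infty(|t|/\delta)^p$ in the region $|t|>\delta$, I would assemble a uniform bound $F(x,t)\le\tfrac12(\lambda_{k+1}-\lambda_1-\epsilon)t^2+C|t|^p$ valid on $\Omega\times\mathbb{R}$ for any chosen $p\in(2,2^\star)$. Substituting into $J(u)$, using $\|u\|^2\ge\lambda_{k+1}\|u\|_2^2$ for $u\in H_k^\perp$ to cancel the quadratic part against a positive remainder $\tfrac{\epsilon}{2\lambda_{k+1}}\|u\|^2$, and invoking the Sobolev embedding $\|u\|_p\le C\|u\|$, one obtains $J(u)\ge\tfrac{\epsilon}{2\lambda_{k+1}}\|u\|^2-\widetilde C\|u\|^p$. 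Since $p>2$, on the sphere $\|u\|=\eta$ with $\eta$ small enough this is at least $\gamma:=\tfrac{\epsilon}{4\lambda_{k+1}}\eta^2>0$. I read the clause ``$\|u\|\le\eta$'' in the statement as a typographical slip for ``$\|u\|=\eta$'', since $J(0)=0$ forbids a constant positive lower bound throughout the ball.

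The only genuine obstacle is this uniformization step in (a): one has to verify that the uniform boundedness of $F$ supplied by $(HSR)$ is the right complement to the local estimate $(F4)$ to produce a subcritical power-law tail with an arbitrary exponent $p<2^\star$, so that Sobolev embedding is still available. Once that estimate is in hand the remaining computation is the routine quadratic cancellation sketched above together with a standard choice of the constants $\eta,\gamma$; hypothesis $(F1)$ is not invoked, which is natural since the linking geometry being checked here is purely local near the origin.
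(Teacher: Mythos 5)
Your proof is correct and follows essentially the same route as the paper: the global upper bound $F(x,t)\le\tfrac12(\lambda_{k+1}-\lambda_1-\epsilon)t^2+C|t|^q$ combined with $\|u\|^2\ge\lambda_{k+1}\|u\|_2^2$ and the Sobolev embedding for (a), and the equivalence of norms on the finite-dimensional $H_k$ together with the lower inequality of $(F4)$ and $\|u\|^2\le\lambda_k\|u\|_2^2$ for (b). You in fact supply a detail the paper leaves implicit (using the $(HSR)$ bound on $F$ to extend the $(F4)$ estimate from $|t|\le\delta$ to all of $\mathbb{R}$), and your reading of ``$\|u\|\le\eta$'' as ``$\|u\|=\eta$'' in (a) is consistent with what the paper's own computation actually establishes.
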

\begin{proof}
First of all, by $(F4)$, we easily see that
\begin{equation*}
F(x,t) \leq \dfrac{1}{2} (\lambda_{k + 1} - \lambda_{1} - \epsilon)|t|^{2} + C|t|^{q}, t \in \mathbb{R}, x \in \Omega,
\end{equation*}
for some $q \in (2, 2^{\star})$ and $C > 0$. This implies that
\begin{eqnarray}
  J(u) &\geq& \dfrac{1}{2}\left( 1 - \dfrac{\lambda_{k + 1} - \epsilon}{\lambda_{k + 1}}\right)\|u\|^{2} - C\|u\|^{q} =  \dfrac{\epsilon}{2\lambda_{k + 1}}\|u\|^{2} - C\|u\|^{q} \geq  \dfrac{\epsilon}{4\lambda_{k + 1}}\|u\|^{2}
\end{eqnarray}
for any $u \in H_{k}^{\perp}, \|u\| = \eta_{1}$ where $\eta_{1} > 0$ is small enough.

On the other hand, using one more time $(F4)$, we also see that
\begin{equation*}
F(x,t) \geq \dfrac{1}{2} (\lambda_{k} - \lambda_{1})|t|^{2} , |t| \leq \delta, x \in \Omega.
\end{equation*}
However, using the fact that $H_{k}$ has finite dimension, we obtain that the norms $\|\|$ and $\|\|_{\infty}$ are equivalents on $H_{k}$.
Thus we conclude that
\begin{equation*}
F(x,u) \geq \dfrac{1}{2} (\lambda_{k} - \lambda_{1})|u|^{2},
\end{equation*}
for any $u \in H_{k}, \|u\| = \eta_{2}$ where $\eta_{2} > 0$ is small enough. As a consequence follows that
\begin{equation*}
J(u) = \dfrac{1}{2} \int_{\Omega} |\nabla u|^{2}dx - \dfrac{\lambda_{1}}{2} \int_{\Omega} u^{2} - \int_{\Omega} F(x, u) dx \leq \dfrac{1}{2} \left(1 - \dfrac{\lambda_{k}}{\lambda_{k}}\right)\|u\|^{2} = 0,
\end{equation*}
for any $u \in H_{k}, \|u\| = \eta_{2}$. Then putting  $\eta = \min(\eta_{1}, \eta_{2}) > 0$ follows the proof of this proposition.
\cqd
\end{proof}

\section{The proof of our main Theorems}
In this section we give the proof of our main theorems using useful results proved in the previous section.

\subsection{The proof of Theorem \ref{1}}
Initially, by Lemma \ref{cerami}, we remember that $J$ satisfies $(Ce)_{c}$ condition for any $c \in \mathbb{R}\backslash \Gamma$ where
$$\Gamma = \left\{- \int_{\Omega} F^{+}(x)dx, - \int_{\Omega} F^{-}(x)dx \right\}.$$
Moreover, by Proposition \ref{p1}, $J$ is bounded from below. Using Ekeland's Variational Principle we obtain a critical point $u_{0}$ for $J$ such that
$$J(u_{0}) = \inf\{ J(u) : u \in H^{1}_{0}(\Omega) \} < - \min\left\{\int_{\Omega} F^{+}(x)dx,\int_{\Omega} F^{-}(x)dx \right\} \leq 0.$$
Thus the problem \eqref{pi} possesses at least one solution. This finishes the proof. \cqd

\subsection{The proof of Theorem \ref{2}}
We will divide the proof into two parts. First of all, we consider the proof under hypotheses $(HSR), (F1)$. Note that $u_{0}$ given by the previous theorem is a nontrivial solution because $(F1)$ implies that
$\inf J < 0.$

On the other hand, using Proposition \ref{p2}, by Mountain Pass Theorem we obtain a nontrivial solution $u \in H^{1}_{0}(\Omega)$
such that $J(u) > 0$. Therefore the problem \eqref{pi} admits at least two nontrivial solutions. This completes the proof to the first part.

Now, we will consider the proof for the second part.  Using $(F2)$ we obtain two critical points $u_{1}, u_{2} \in H^{1}_{0}(\Omega)$ as critical point of $J^{\pm}$ defined in Remark \ref{re1}, respectively. In this way, we see also that
$$J^{\pm}(u_{i}) > 0, i = 1, 2.$$
In addition the Maximum Principle implies that
$$u_{1} > 0 \,\,\mbox{and} \,\, u_{2} < 0 \,\, \mbox{in} \,\, \Omega$$
and $u_{1}, u_{2}$ are two distinct nontrivial critical points for $J$.
As a consequence the problem \eqref{pi} possesses at least three nontrivial solutions $u_{0},u_{1}, u_{2}$ where $u_{0}$ was obtained as above-mentioned. This completes the proof. \cqd

\subsection{The proof of Theorem \ref{3}}
Recall that $J$ has at least one critical point $u_{0}$ given by minimization, see Theorem \ref{1}. As a consequence $\inf J = J(u_{0}) < 0$.

On the other hand, by Proposition \ref{p3}, we also obtain a critical point $u_{\star}$ at level $c^{\star} > 0$ which was defined in \eqref{c}. In particular, we see easily that $J(u_{\star}) > 0$.
This shows that $J$ has at least two nontrivial critical points and the problem \eqref{pi} admits at least two nontrivial solutions. So the  proof of this theorem is now complete. \cqd

\begin{remark}
We point out that our main condition $(HSR)$ can be used for other resonance elliptic problems using further variational methods. However, the condition $(Ce)_{c}$ holds only in $\mathbb{R}\backslash \Gamma$. This a serious restriction in strong resonance problems when the main tool is to apply variational methods. So we have a natural ask: Is there critical point for $J$ with energy $c \in \Gamma$. This is a big problem because the functional associate does not satisfy the Cerami condition for those levels. To the best our knowledge these problems are still open.
\end{remark}

\section{Examples}
In this section we will discuss some simple examples where the condition $(HSR)$ is satisfied. After that we shall discuss our main hypotheses in those examples showing the existence and multiplicity of solutions for the problem \eqref{pi}. Initially, we consider

$\mathbf{Example}$ 1: Let $g: \mathbb{R} \rightarrow \mathbb{R}$ be a function of class $C^{2}$ such that
$$\lim_{|t| \rightarrow \infty} \dfrac{g^{'}(t)}{t} = 0 \,\,\mbox{and} \,\, g^{'}(0)= 0.$$
Also let $a: \Omega \rightarrow \mathbb{R}$ be a continuous function.
Then
$$F(x,t) = \dfrac{a(x) g(t)}{1 + t^{2}}, x \in \Omega, t \in \mathbb{R}$$
satisfies
$$\lim_{|t| \rightarrow \infty} F(x,t) = 0.$$
Besides that, taking $f(x,t) = F^{'}(x,t)$, we obtain that
$$\lim_{|t| \rightarrow \infty} t f(x, t) = \lim_{|t| \rightarrow \infty} f(x, t) = 0.$$
In particular, the function $f$ verifies $(HSR)$. In that case the functional $J$ satisfies $(Ce)_{c}$ condition if only if $c \neq 0.$

Next we will assume that $a \equiv 1$. Under this hypothesis, using the L'Hospital rule, we see that
$$\lim_{|t| \rightarrow 0} \dfrac{2 F(x,t)}{t^{2}} = g^{''}(0).$$
Thus, assuming also that $g^{''}(0) > 0$, follows that $(F1)$ is verified. Here we mention the fact that $u = 0$ is a trivial solution for the problem \eqref{pi} because $f(x,0) \equiv 0$. As a consequence the problem \eqref{pi} possesses at least one nontrivial solution given by Theorem \ref{2}.

Next will consider another example where $f(x,t)$ is equal to zero for $|t|$ big enough. More specifically, we consider

$\mathbf{Example}$ 2: Let $h:\mathbb{R} \rightarrow \mathbb{R}$ be a continuous function such that
$$h(t) = 0, \,\, \mbox{for any} \,\, |t| \geq l$$
where $l > 0$. Then
$$f(t) = t h(t), t \in \mathbb{R}$$
satisfies $(HSR)$ and the $(Ce)_{c}$ condition for any $c \in \mathbb{R}\backslash \Gamma$
where we define
$$\Gamma = \left\{- F^{+}|\Omega|, - F^{-}|\Omega| \right\}$$
and
$$F^{+}= \int_{0}^{l} sh(s)ds, F^{-} = - \int_{-l}^{0} sh(s)ds.$$
Supposing also that
$$\int_{0}^{l} sh(s)ds = \int_{-l}^{0} sh(s)ds = 0$$
holds it follows that the associated functional $J$ satisfies the $(Ce)_{c}$ if only if $c \neq 0$. In addition, assuming also that
$h(0) > 0$ holds follows that $(F1)$ is verified. Indeed, we see that
$$\lim_{t \rightarrow 0} \dfrac{2 F(t)}{t^{2}} = \lim_{t \rightarrow 0} \dfrac{ f(t)}{t} = h(0) > 0.$$
This fact shows that
$$F(t) \geq h(0)\dfrac{t^{2}}{2}, |t| \leq \delta,$$
for some $\delta > 0$ small. As a consequence
$$\int_{\Omega} F(t \phi_{1})dx \geq \dfrac{h(0) t^{2}}{2} \int_{\Omega} \phi_{1}^{2}dx = \dfrac{h(0) t^{2}}{2} > 0,$$
for any $t > 0$ small enough. Then the problem \eqref{pi} admits at least one solution provided by Theorem \ref{2}. Actually we can also see that $(F3)$ is verified. Therefore the problem \eqref{pi} has at least two nontrivial solutions given by Theorem \ref{2}.

When
$$h(0) \in \left(\lambda_{k} - \lambda_{1}, \lambda_{k + 1} - \lambda_{1} - \epsilon \right), k \geq 2,$$
holds for some $\epsilon > 0$ it follows easily that $(F4)$ is satisfied. In that case, assuming $(F5)$, the problem \eqref{pi} has at least two nontrivial solutions given by Theorem \ref{3} one of them with negative energy and another one with positive energy.

Now we shall consider the following example:

$\mathbf{Example}$ 3: Define the function $F:\mathbb{R} \rightarrow \mathbb{R}$ by
$$F(t) = \dfrac{ln(1 + t^{2})}{1 + t^{2}}, t \in \mathbb{R}.$$
We easily see that $f(t) = F^{'}(t)$ satisfies $(HSR)$ and $(Ce)_{c}$
condition if only if $c \neq 0$. Besides that, $f(0) = 0$ and the hypothesis (F1) is trivially verified. Therefore the Theorem \ref{1} give us a nontrivial solution for the problem \eqref{pi}. Moreover, using Theorem \ref{2}, follows that the problem \eqref{pi} admits at least two nontrivial solutions.

$\mathbf{Acknowledgments}$: The author was partially supported by CNPq-Procad-UFG with grant number: \\

\end{document}